\def\draft{y}
\newtheorem{thm}{Theorem}[section]
\newtheorem{prop}[thm]{Proposition}
\newtheorem{lem}[thm]{Lemma}
\newtheorem{defn}[thm]{Definition}
\newcommand{\theoremname}{Theorem:}
  \theoremstyle{definition}
  \newtheorem*{claim*}{Claim}
  \newtheorem{ex}[thm]{Example}
  \newtheorem*{question*}{Question}
  \newtheorem*{answer*}{Answer}
  \newtheorem*{application*}{Application}
  \theoremstyle{remark}
  \newtheorem*{rmk*}{Remark}
\newcommand{\Q}{\mathbb{Q}}
\newcommand{\R}{\mathbb{R}}
\def\bbR{{\mathbb R}}
\def\draftcut{\if\draft y \cleardoublepage \fi}
\begin{document}

\title{Yarn Ball Knots and Faster Computations}

\author{Dror Bar-Natan}
\address{University of Toronto}
\email{drorbn@math.toronto.edu}
\urladdr{\url{http://www.math.toronto.edu/drorbn}}

\author{Itai Bar-Natan}
\address{University of California, Los Angeles}
\email{itaibn@math.ucla.edu}
\urladdr{}

\author{Iva Halacheva}
\address{Northeastern University}
\email{i.halacheva@northeastern.edu}
\urladdr{\url{https://sites.google.com/site/ivahalacheva3/}}

\author{Nancy Scherich}
\address{Elon Univeristy}
\email{nscherich@elon.edu}
\urladdr{\url{http://www.nancyscherich.com}}

\keywords{finite type invariants}

\thanks{}

\begin{abstract}
We make use of the 3D nature of knots and links to find savings in computational complexity when computing knot invariants such as the linking number and, in general, most finite type invariants. These savings are achieved in comparison with the 2D approach to knots using knot diagrams.
\end{abstract}

\maketitle
\tableofcontents

\section{Introduction}

\subsection{Motivation}
A recurring question in knot theory is ``Do we have a 3D understanding of a knot invariant?". We often think about knots as planar diagrams and compute invariants from the diagrams. 
In this paper, we consider computation from a planar diagram to be  a ``2D" understanding of the invariant. 
For example, Kauffman~\cite{Kauffman} gave a description of the Jones polynomial using planar diagrams, bringing the understanding of this invariant to 2D.\footnote{In some sense, the original description of the Jones polynomial~\cite{Jones}, by Vaughan Jones and using braids, is 1D. For, while the full 3D rotation group $SO(3)$ acts on 3D space, and the 2D rotation group $SO(2)$ acts on planar diagrams, no continuous symmetries are left when one comes down to braids.}
Witten~\cite{Witten} developed Chern-Simons theory, a $3$-dimensional topological quantum field theory, partially in order to understand the Jones polynomial in a 3D way. However, this theory does not seem to lead to easier computations of the invariant. 
So, while the Chern-Simons theory description is 3D, from the point of view of this paper, it is (at least computationally) still an incomplete 3D understanding of the Jones polynomial.


\begin{figure}[htp]
\centering
\begin{picture}(220,150)
\put(-30,0){\includegraphics[width=.3\textwidth]{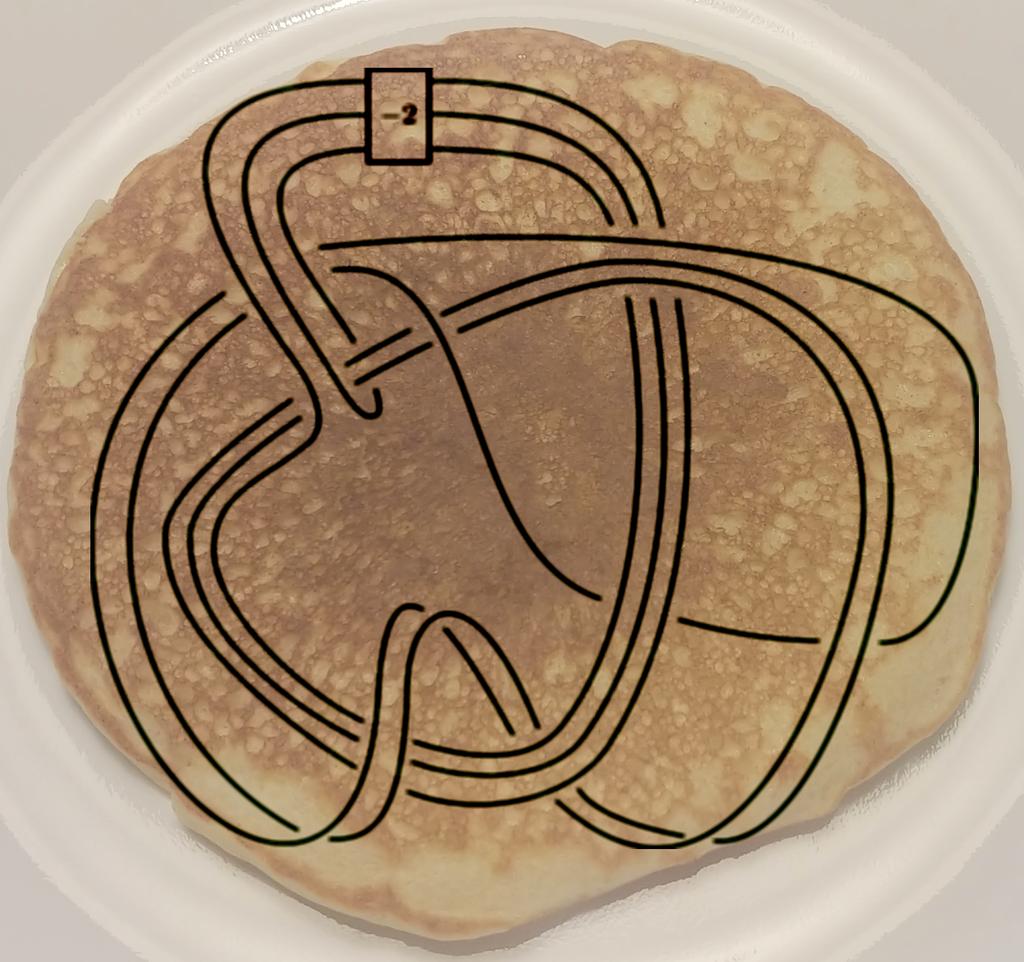}\hspace{.6cm}}
\put(120,0){\includegraphics[width=.3\textwidth]{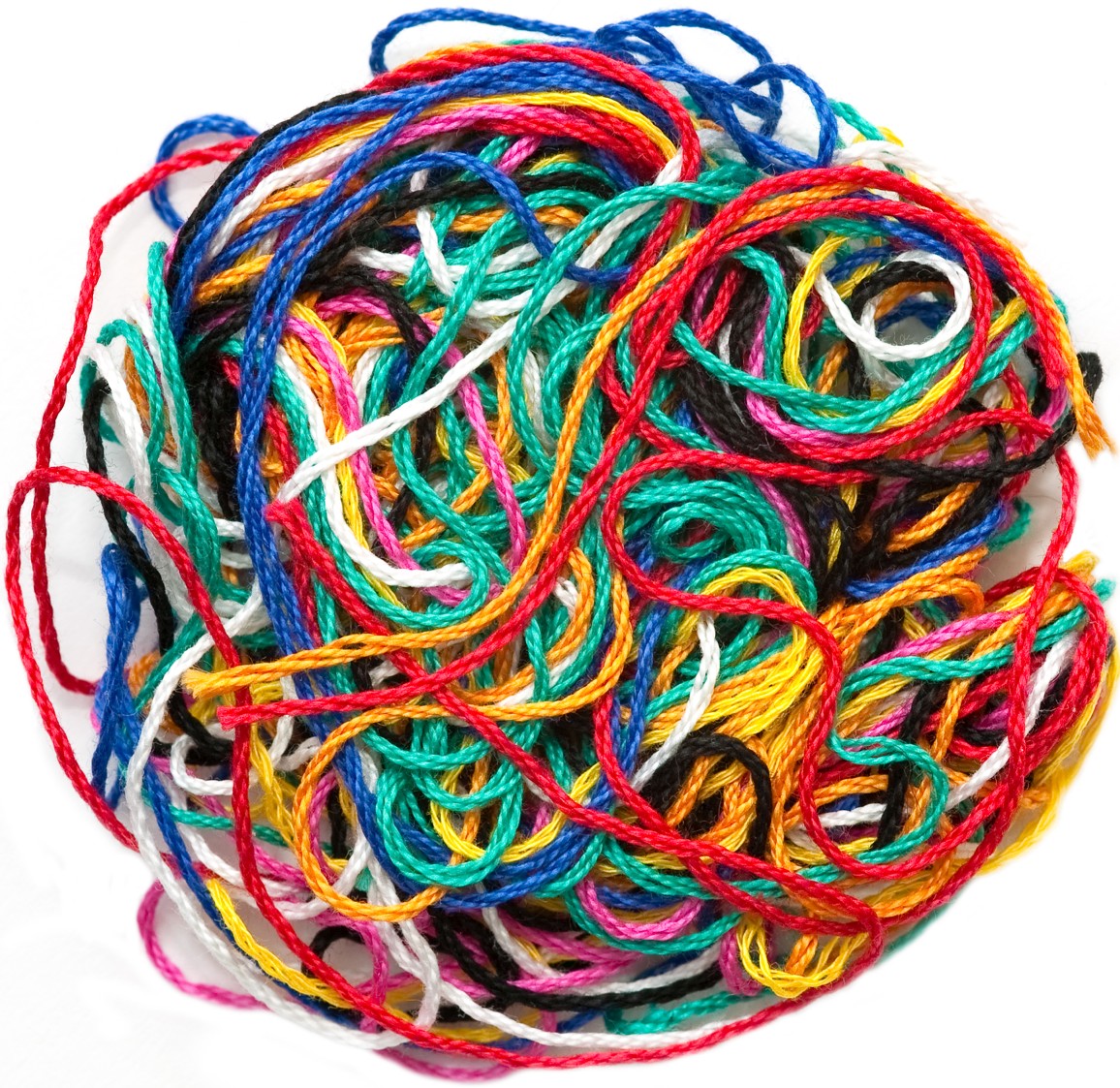}\hspace{.7cm}}
\put(-30,5){(A)}
\put(120,5){(B)}
\end{picture}
\caption{(A)~A planar diagram of a knot in a pancake (knot diagram by Piccirillo~\cite{Piccirillo}). (B) A yarn ball.}  \label{fig:PancakeAndYarn}
\end{figure}

  The complexity of planar knot diagrams is often measured by their number of crossings, denoted by $n$ in this paper, and the computational complexity of a knot invariant is typically described in terms of $n$. Three-dimensionally, planar diagrams can be seen as projections of knots embedded in ``pancakes"-- very flat and  wide subsets of $\bbR^3$, as in Figure \ref{fig:PancakeAndYarn} (A).
This pancake description of a knot is somewhat artificial and does not realistically describe knots that occur in nature. 
For example, knotted DNA is shaped much more like the ball of yarn in Figure  \ref{fig:PancakeAndYarn} (B), than the pancake knot in (A).

The point of view we present in this paper is that knots are three-dimensional and the best way to understand a knot should be three-dimensional. In the subsequent sections, we propose  some new language to aid the knot theory community in discussion surrounding current understanding of an invariant.

\subsection{Yarn balls and pancakes}
For complexity measurements in this paper, we measure only polynomial degree  (i.e.  we ignore constants and $\log (n)$ terms). We write $f(n)\sim g(n)$ to mean there exist natural numbers $c,k,N$ so that for all $n>N$, we have that 
\[\frac{1}{c}g(n)(\log(n))^{-k}<f(n)\stackrel{(1)}{<}cg(n)(\log(n))^k.\]

\noindent For example, for us, $n^4\sim {5.4}\; {n^4}(\log(n))^8$. If the inequality (1) holds true, we write  $g \gtrsim f$.
Also, we write $g\gg f$ to mean that for all constants $c$ and $k$, and for all large enough $n$ we have that $cf(n)(\log
(n))^k<g(n)$.


Given a knot, i.e. a particular embedding of $S^1$ into $3$-space, a knot invariant by definition will output the same value for different diagrams of the knot (i.e. different planar projections). However, the complexity of computing the invariant using a particular algorithm may differ depending on the number of crossings $n$ in the input diagram. 
So, we will measure the computational complexity of an invariant using the number of crossings of the input diagram, rather than the minimal crossing number of the knot type (the knot considered up to ambient isotopy). Similarly, if we start with a 3D presentation of a knot, in a yarn ball or a grid as discussed in Section \ref{sec:GridsAndLinkingNumber}, the complexity of an invariant will be measured by the volume $V$ of the input knot, described below, and not by the minimal volume of the knot type.

The comparison between 3D computational complexity and 2D complexity of a knot invariant relies on the comparison of $V$ to $V^{4/3}$.
The quantity $\sim V^{4/3}$ is the number of crossings in a generic projection of a yarn ball knot. 
We describe this result here and formalize the concepts of a \emph{pancake knot} and \emph{yarn ball knot}.

\begin{defn}
A \emph{pancake} is a thickened disk in $\R^3$ with radius greater than 2 and constant height 2. 
A \emph{pancake knot} is a knot contained in a pancake.
\end{defn}
 
Planar diagrams of knots are projections of pancake knots  in $\R^3$.

\begin{defn}
    A \emph{yarn ball knot} in $\R^3$ is a knotted tube of uniform width 1 in a 3-ball of diameter $L$, such that the volume of the knot (i.e. the volume of the tube) is $\sim$ $L^3$. $L$ is called the diameter of the yarn ball knot. 
\end{defn}

An equivalent notion of a yarn ball knot is a knot embedded in a grid, which we discuss further in Section \ref{sec:GridsAndLinkingNumber}. Theorems \ref{thm:lk} and \ref{Thm:FiniteTypeD} are proved using knots embedded in a grid.

A yarn ball knot in a $3$-ball of volume $V$ has by definition volume and length both $\sim$ $V$.
Most knots appearing in nature are yarn ball knots, as for example the knotted ball of yarn in Figure \ref{fig:diskwithunitsquare}.

 \begin{figure}
    \begin{picture}(280,150)
\put(0,20){\includegraphics[width=.6\textwidth]{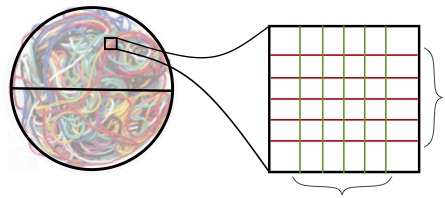}\hspace{.7cm}}
\put(188,7){$\sim \frac{L}{2}$ strands}
\put(265,76){$\sim \frac{L}{2}$ strands }
\put(50,70){\large $L$}
\put(53,107){\textbf{1}}
\put(63,98){\textbf{1}}
\end{picture}
     \caption{Projection of a yarn ball knot to a disk of diameter $L$. To count the crossings in the projection, subdivide the disk into $1\times 1$ squares. Each square will have $\sim \frac{L}{2}$ strands crossing each other, for a total of $\sim {L \choose 2} \sim L^2$ crossings in that square.}
     \label{fig:diskwithunitsquare}
 \end{figure}
 
 By projecting the yarn ball knot in a generic direction onto a disk, and shrinking the width of the tube to $\epsilon$, we attain a planar knot diagram.  The number of crossings of this projection can be estimated by  subdividing the disk into $1\times 1$ squares, as in Figure \ref{fig:diskwithunitsquare}. 
For a sufficiently complicated knot, heuristically we expect most such $1\times 1$ squares  will have $L$ layers of strands above them, and one can expect these strands to cross around $\sim {L \choose 2} \sim L^2$ times in the projected square. 
Since there are $\sim L^2$ squares, the total crossing number of this projection is around $n \sim L^2L^2=L^4=V^{{4}/{3}}$.
We assert that at worst, a knot of volume $V$ can always be projected with $\sim V^{4/3}$ crossings.

Since we aim to have a 3D understanding of knot invariants, we study knots in the shape of the yarn ball as opposed to the pancake. We see that to describe a yarn ball knot of volume (or length) $V$ as a planar diagram, we would need $\sim V^{{4}/{3}}$ crossings. Since $V\ll V^{4/3}$, for large values of $V$ it requires many more bits to describe a yarn ball knot via its projection rather than directly as a yarn ball.

\subsection{Computational complexities of knot invariants}

We use the notation $C_{\zeta}(3D,V)$ and $C_{\zeta}(2D,n)$ to denote the asymptotic time complexities of computing an invariant $\zeta$ via different dimensional methods (3D or 2D) and sizes of input. 
\begin{defn}
     Let $\zeta$ be a knot invariant. The worst-case complexity of computing $\zeta$ on a knot given by a planar diagram with $n$ crossings is denoted by \[C_\zeta(2D,n).\] 
     The worst-case complexity of computing $\zeta$ on a knot given as a yarn ball of volume $V$ is denoted by \[C_\zeta(3D,V).\] 

\end{defn}

However, these abstract complexities, while well defined, are unknown and often unknowable. In practice, one works with a known upper bound for the complexity coming from a current best known algorithm.
To reflect this more practical perspective, we define $FKT_{\zeta}(kD,n)$, or \textbf{f}astest \textbf{k}nown  \textbf{t}ime.

\begin{defn}
    Let $\zeta$ be a knot invariant. $FKT_{\zeta}(kD,n)$ is the computational complexity of the current fastest algorithm known to compute $\zeta$ with $k$-dimensional input of size $n$.
\end{defn}

$FKT_{\zeta}(kD,n)$ is dependent on our current understanding of the computational problem, as the current known fastest algorithm may be improved or supplanted by a better one. In analogy, a current world record may be broken by a faster athlete. To summarise, $C_{\zeta}(kD,n)$ has a fixed value, which can however be very difficult to determine, while the value of $FKT_{\zeta}(kD,n)$ depends on the current best known algorithm and so can be computed using that knowledge. By these definitions, we always have that $C_{\zeta}(kD,n) \leq FKT_{\zeta}(kD,n)$.

\subsection{2D versus 3D algorithms}

Given a yarn ball $K$ of volume $V$, we can always compute an invariant $\zeta$ on $K$ by first projecting to the plane\footnote{The projection itself can be computed quickly, in time $\sim V^{4/3}$, and for all interesting $\zeta$, this extra work is negligible. (The computation can be performed by going over all crossing fields in a grid presentation of the knot, as described in Section \ref{sec:GridsAndLinkingNumber}.)},  obtaining a planar diagram with $\sim V^{4/3}$ crossings, and then computing $\zeta$ using our best 2D techniques. Hence always, $C_{\zeta}(3D,V)\leq C_{\zeta}(2D,V^{4/3})$.
 It is interesting to know when 3D techniques can do even better. Our first main result is to show that for the link invariant the \emph{linking number}, there is a 3D computation technique whose worst-case complexity is faster than every worst-case 2D technique.

\vskip 2mm
\noindent \textbf{Theorem \ref{thm:lk}.} (Proof in Section \ref{sec:GridsAndLinkingNumber})\textit{
Let $lk$ denote the linking number of a 2-component link. Then $C_{lk}(2D,n)\sim n$ while $C_{lk}(3D,V)\sim V$.}\\

The linking number of a link is an example of a \emph{finite type invariant}. 
Finite type invariants underlie many of the classical knot invariants, for instance they give the coefficients of the Jones, Alexander, and more generally HOMPFLY-PT polynomials \cite{BL93, BN1}. We prove the following computational bounds for all finite type invariants.\\

\noindent\textbf{Theorem \ref{Thm:FiniteType2}.} (Proof in Section \ref{sec:FiniteType})\textit{
If $\zeta$ is a finite type invariant of type $d$ then $C_\zeta(2D, n)$ is at most $\sim n^d$.
}\\

\noindent\textbf{Theorem \ref{Thm:FiniteTypeD}.} (Proof in Section \ref{sec:FiniteType})\textit{
If $\zeta$ is a finite type invariant of type $d$ then $C_\zeta(3D, V)$ is at most $\sim V^d$.
}\\

 The actual complexities $C_\zeta(2D,n)$ for some specific though `special' finite type $\zeta$'s, such as the coefficients of the Alexander polynomial, are known to be much smaller. However, for generic $\zeta$'s, these theorems suggest that 3D techniques will be more computationally efficient than 2D ones.  We suspect the upper bounds in Theorem 3.2 and 3.3 can be improved by closer consideration of the counting arguments used in the proofs. However, we view these theorems as a significant starting point that we hope to improve upon in the future and encourage our readers to do the same.

To capture the comparison between the complexity of 2D and 3D techniques, we introduce the following terminology.

 \vskip 3mm
\noindent \textbf{Conversation Starter 1.} \textit{A knot invariant $\zeta$ is said to be \emph{computationally 3D}, or C3D, if}
$$FKT_{\zeta}(3D,V)\ll FKT_{\zeta}(2D,V^{4/3}).$$

\noindent \textit{In other words, $\zeta$ is \textit{C3D} if substantial savings can be made to the computation of $\zeta$ on a yarn ball knot, relative to the complexity of computing $\zeta$ by first projecting the yarn ball to the plane.}\\

As discussed above, the notion of an invariant being computationally 3D that we use here is dependent on the current knowledge of the invariant. As our understanding grows and our computational techniques get better, an invariant might become \textit{C3D}, or lose its \textit{C3D} status. 
However, the question of whether an invariant is \textit{C3D}, as we understand it at a given time, still has merit as it measures our understanding, as a community, of knot theory as a 3D subject.
With this new terminology, Theorem \ref{thm:lk} could be restated as

\vskip 2mm
\noindent \textbf{Theorem \ref{thm:lk}.} (restated)\textit{
The linking number of a 2-component link is C3D.}\\

The results of Theorems~\ref{Thm:FiniteType2} and~\ref{Thm:FiniteTypeD} naively suggest that finite type invariants are also C3D, but the theorems only give one sided bounds.
 Yet, we believe that our naive conclusion remains valid, at least in the form ``most finite type invariants are C3D''.

The opinion in this paper is that in general knot invariants should be \textit{C3D}. Unfortunately, as of the time this paper is written, very few knot invariants are known to be \textit{C3D}. Are the Alexander, Jones, or  HOMFLY-PT polynomials \textit{C3D}? Why or why not? Are the Reshetikhin-Turaev invariants  \textit{C3D}? Are knot homologies \textit{C3D}? While we seem to have a weak understanding of these fundamental invariants from a 3D perspective, this is cause for optimism; there is still much work to be done.

\subsection{Further directions}
The above ideas admit a further generalization or extension. Instead of using computational complexity to compare 2D and 3D understandings of invariants, we can also use the notion of the maximal value of other quantities relating to the size of the knot, which motivates the next conversation starter.\\

\noindent \textbf{Conversation Starter 2.} \textit{If $\eta$ is a stingy quantity (i.e. we expect it to be small for small knots), we say $\eta$ has savings in 3D, or has `S3D' if }
$$M_\eta(3D,V)\ll M_\eta(2D,V^{4/3}),$$

\noindent \textit{where $M_\eta(kD,s)$ is the maximum value of $\eta$ on all knots described $k$-dimensionally and of size $s$.}\\

For example, the hyperbolic volume is a stingy quantity--the more complicated a knot is, the more complicated its complement in $S^3$ will be, which makes the question of putting a hyperbolic structure on it harder. We expect hyperbolic volume to have savings in 3 dimensions.\\

\noindent\textbf{Conjecture} (D. Bar-Natan, van der Veen) Hyperbolic volume has \emph{S3D}.\\

The genus of a knot is another example of a stingy quantity, but we do not know if the genus of a knot has \emph{S3D}, or not. If a knot is given in 3-dimensions, is the best way to find the genus truly to compute the Seifert surface from a projection to 2D, at a great cost? The genus is by all means a 3D property of a knot, and it seems as though it \emph{should} be best computed in a 3D manner. 
 
 We hope that these conversation starters will encourage our readers to think about more 3D computational methods. The remaining two sections of this paper are dedicated to proving Theorems \ref{thm:lk} and \ref{Thm:FiniteTypeD}.\\

\noindent \textbf{Acknowledgements.} This work was partially supported by NSERC grant RGPIN-2018-04350, and by the Chu Family Foundation (NYC). We would like to thank our anonymous referee for helpful comments.

\section{Grid Knots and Linking Number}\label{sec:GridsAndLinkingNumber}

 To emphasize the 3D nature of knots, we think of them as yarn ball knots, instead of as pancake knots. An equivalent notion is ``grid knots'' (see also~\cite{BL12}).  A \emph{grid knot (or link) of size $L$} is a labeled parameterized knot or link embedded as a subset of a grid with side length $L$.  
 The arcs of a grid knot are enumerated in order of the parametrization of the knot along the unit grid segments of the grid lines. 
 For a grid link, the link components are enumerated, and each arc of the link is labeled by a pair $(c,p)$, where $c$ is the arc's component enumeration, and $p$ is the arc's  parametrization enumeration. 
 Some examples of grid knots are shown in Figure \ref{fig:grid_Knot}. Figure \ref{fig:grids} (A) and (B) shows an example of a $4\times 4\times 4$ grid from different perspectives. 

\begin{figure}[htp]
\includegraphics[width=.4\textwidth]{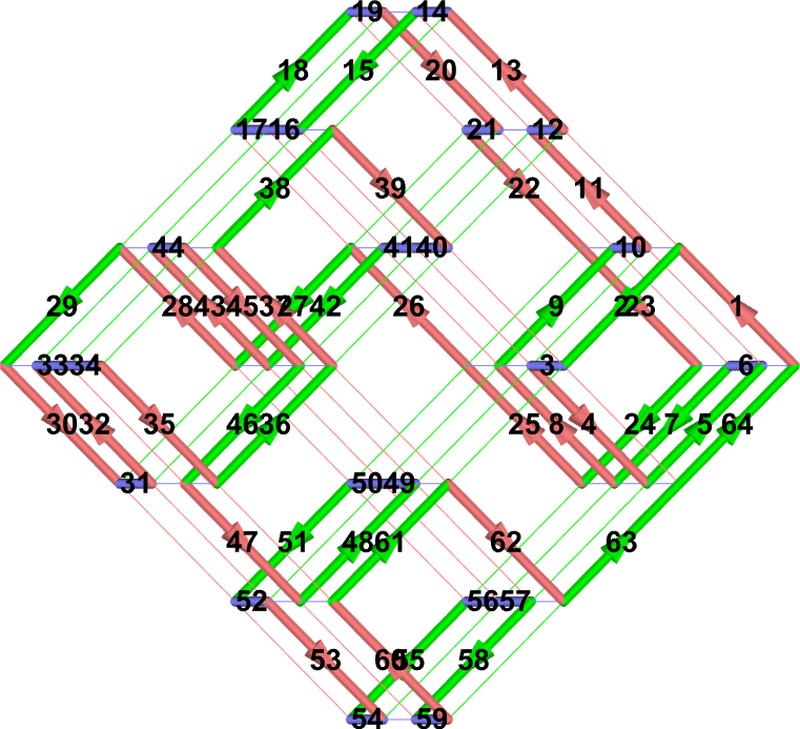}
\includegraphics[width=.4\textwidth]{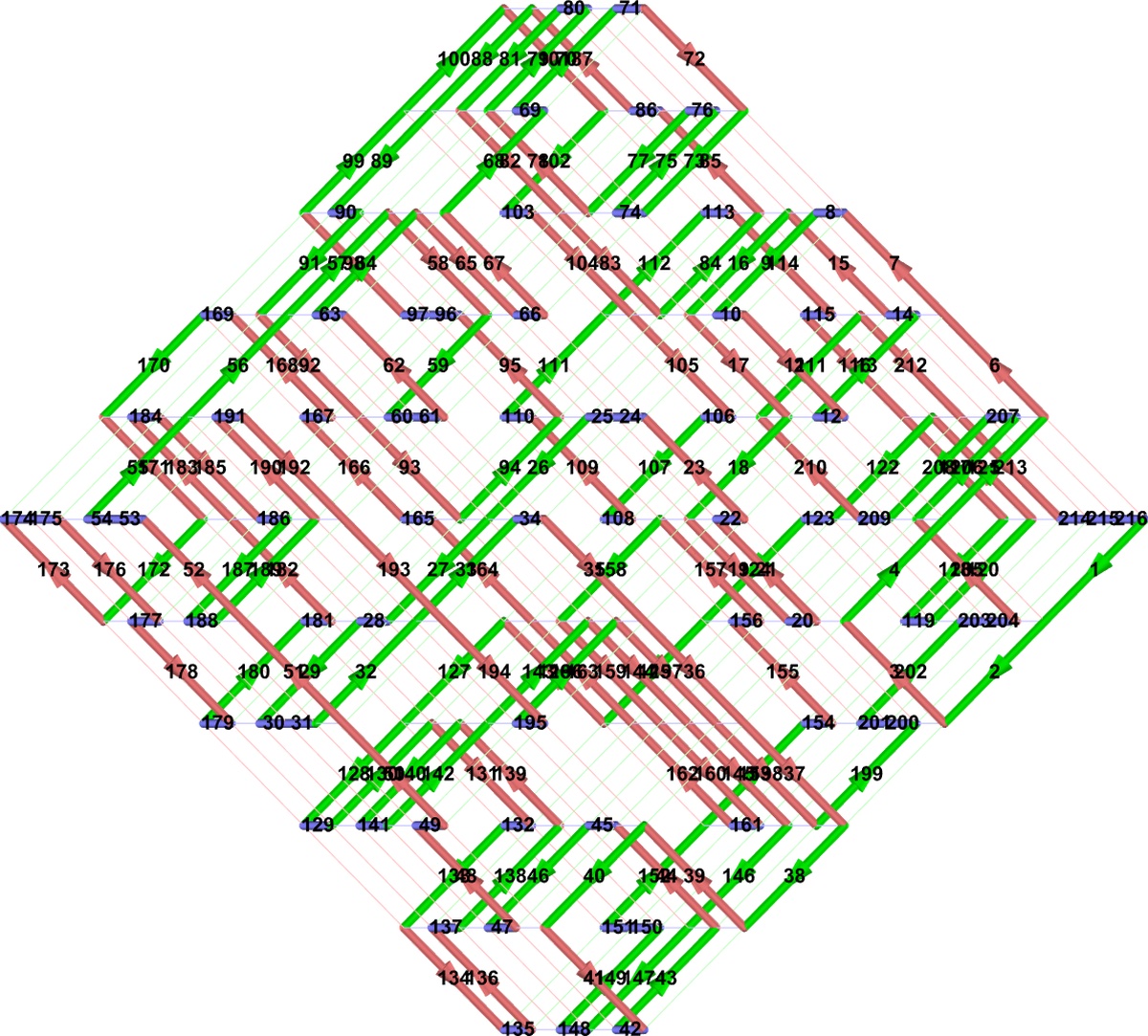}
\caption{Two examples of  grid knots. The left grid knot has  $L=3$ and 64 labeled arcs, and the right has $L=5$ and 216 labeled arcs.}
\label{fig:grid_Knot}
\end{figure}

\begin{figure}[htp]
\centering
\begin{picture}(300,140)
\put(-65,0){\includegraphics[width=.27\textwidth]{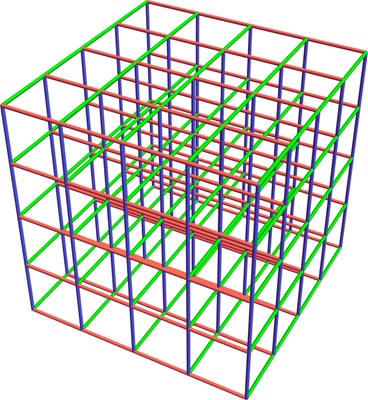}\hspace{.6cm}}
\put(80,0){\includegraphics[width=.3\textwidth]{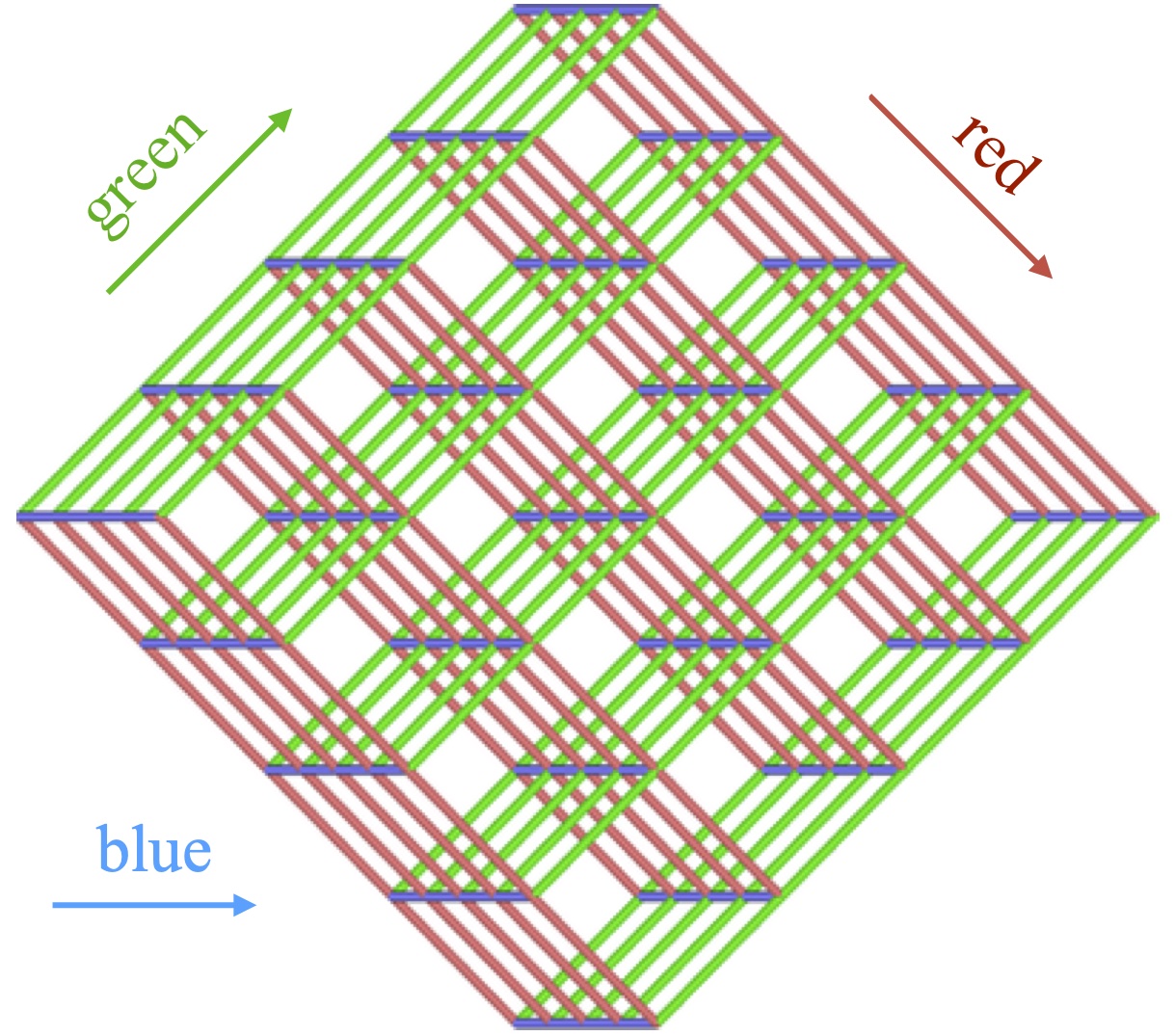}\hspace{.7cm}}
\put(250,0){\includegraphics[width=.25\textwidth]{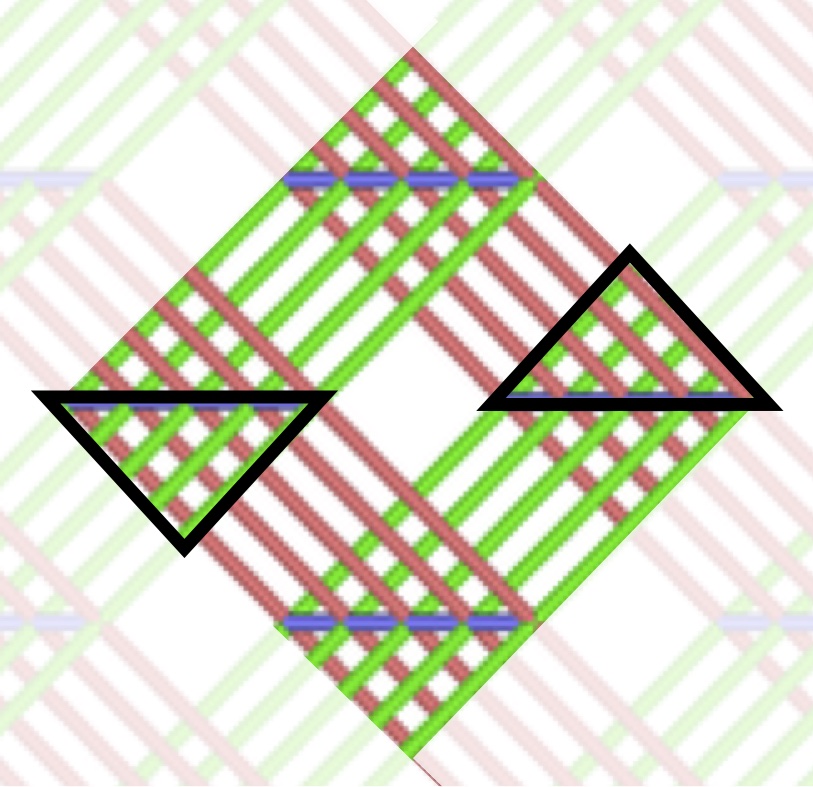}}
\put(-60,0){(A)}
\put(80,0){(B)}
\put(80,0){(B)}
\put(220,0){(C)}
\put(255,30){$F_1$}
\put(345,65){$F_2$}
\end{picture}
\caption{The grid in (B) shows a slightly askew top down view of the grid from (A). (C) highlights two crossing fields, $F_1$ and $F_2$, of a grid.}
\label{fig:grids}

\end{figure}

 The process of converting an oriented yarn ball knot of length/volume $V$ to a grid knot is as follows. 
 Replace the yarn by an approximation along grid lines with grid spacing say $\frac{1}{10}$'th the unit width of the yarn.  
 Rescale so that the grid squares are unit length again. 
 Starting at any corner of the grid knot, label the arcs of the knot in order according to the orientation.
 The resulting knot is bounded in a box of size $\sim 10^3V$, and this process takes  $\sim V$ computation steps.
To convert a grid knot to a yarn ball knot, scale the grid so the distance between neighbouring grid points is say 3 or 5 units. 
Replace the arcs of the knot with yarn of width 1 and round out the corners. 
This process takes time proportional to the length of the knot.
 When computing an invariant of a yarn ball knot, first converting the knot into a grid knot adds a negligible amount of computation time.

  For the remainder of this paper, we conventionally view grids with the slightly askew top-down view  as in Figure \ref{fig:grids} (B). From this perspective, all of the crossings of a grid knot occur in triangular \emph{crossing fields} of the grid-- highlighted in Figure \ref{fig:grids} (C). 
   The grid lines in the $x,y$ directions are colored in green and red, and the grid lines in the vertical direction are colored blue. 
From the askew top down perspective as in Figure \ref{fig:grids} (B), we keep the convention that $``/"$ grid lines are called ``green", ``\textbackslash" grid lines are called ``red", and horizontal grid lines are called ``blue".
  All of the crossings in a crossing field occur between green and red grid lines. 
The vertical blue grid lines never participate in a crossing.

There are $2L^2$ triangular crossing fields; 2 at each of the $(L-1)^2$ interior corners, and one along each exterior corner except two corners, which is $2L+2(L-1)$, for a total of $2(L-1)^2+2L+2(L-1)=2L^2$.

\subsection{Linking Number}
For a two-component link $\mathcal{L}$, the \emph{linking number} of $\mathcal{L}$, denoted $lk(\mathcal{L})$, is a classical link invariant that measures how the two components are linked. 
From a planar projection of $\mathcal{L}$, $lk(\mathcal{L})$ can be computed as follows: Only counting ``mixed'' crossings that involve both components (the over strand is from one component and the under strand is from the other component), $lk(\mathcal{L})$  is one half the difference of the number of positive crossings and the number of negative crossings. 
Using this 2D method for a planar diagram with $n$ crossings, computing $lk$ requires $\sim n$ steps-- one for every crossing\footnote{Notice that in the worst case, and presumably also typically, the number of mixed crossings is $\sim n$.}-- which shows that $C_{lk}(2D,n)\sim n$.
Using grid links, we provide a 3D algorithm that computes $lk$ in time $\sim V$, and since $n=V^{\frac{4}{3}}\gg V$, this proves that $lk$ is \emph{C3D}.

\begin{thm}\label{thm:lk}
 $C_{lk}(3D,V)\sim V$, while $C_{lk}(2D,n)\sim n$, so $lk$ is C3D.

\end{thm}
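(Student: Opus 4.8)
The plan is to establish the two halves separately; the $2D$ bound $C_{lk}(2D,n)\sim n$ is already argued in the text (scan all $n$ crossings, tally signed mixed crossings, divide by two), so the real content is the $3D$ algorithm achieving $C_{lk}(3D,V)\sim V$. Since $V\sim L^3$ and a grid link of size $L$ has $\sim L^3$ labeled arcs, ``linear in $V$'' means ``linear in the number of arcs'', so I am allowed a constant amount of work per arc (and per grid edge), but I am \emph{not} allowed to look at all $\sim L^4$ crossings of a generic projection. The key idea is that the linking number is a sum of local contributions that can be reorganized so that each of the $2L^2$ crossing fields is processed in time proportional to the number of strands passing through it, and these counts telescope to something linear in $V$.

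First I would fix the askew top-down projection from Figure~\ref{fig:grids}(B), so that every crossing lies in one of the $2L^2$ triangular crossing fields, each crossing being between a green ($/$) strand and a red ($\backslash$) strand, with blue (vertical) strands never crossing. Within a single crossing field, the mixed crossings between component $1$ and component $2$ contribute, up to sign, according to over/under and orientation data that is determined by the $z$-heights and the orientations of the participating horizontal arcs. The crucial combinatorial point is that inside one crossing field the green strands are linearly ordered (say left to right) and so are the red strands, and two strands cross exactly once; so the signed count of mixed crossings in that field can be computed by a single pass that maintains running totals — e.g., as one sweeps the red strands in order, keep a counter of how many green strands of component $1$ with a given orientation and lying above/below have already been seen — rather than by examining each of the $\sim\binom{L}{2}$ crossings individually. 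With an appropriate bucketing by orientation sign and by the sign of the height difference (a bounded number of buckets), the work in a crossing field is $\sim$ (number of strands entering that field).

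Next I would bound the total over all crossing fields. Each unit grid edge of the link lies in a bounded number of crossing fields (it enters at most the two fields adjacent to the interior corner at each of its endpoints, hence $O(1)$ fields), so summing ``number of strands per field'' over all $2L^2$ fields is $\sim$ (number of grid edges occupied by the link) $\sim V$. Therefore the whole sweep costs $\sim V$. Finally, assemble: the algorithm (i) reads the grid link and sorts, within each crossing field, the participating horizontal arcs by position, which by bucket/radix sorting on coordinates in $[0,L]$ costs $\sim V$ total; (ii) performs the per-field running-total sweeps described above, total cost $\sim V$; (iii) sums the signed contributions and halves. This yields $C_{lk}(3D,V)\lesssim V$, and the matching lower bound $C_{lk}(3D,V)\gtrsim V$ is immediate since one must at least read the input, which has size $\sim V$. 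Combined with $C_{lk}(2D,n)\sim n$ and the relation $n\sim V^{4/3}\gg V$ from the Introduction, this shows $lk$ is C3D.

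The main obstacle I anticipate is step~(ii): making precise that the signed mixed-crossing count within a crossing field genuinely reduces to an $O(1)$-bucket running-total sweep rather than a quadratic pairwise comparison. One has to check that the sign of each mixed crossing factors as a product of (orientation of the green arc) $\times$ (orientation of the red arc) $\times$ (sign of the $z$-height difference), each of which is a property of an individual strand or of an order-comparison that the sweep already tracks — so that no genuinely pairwise information beyond ``which is higher'' is needed. Once that factorization is verified, the accounting is routine; and the edge-counting argument that bounds $\sum_{\text{fields}}(\#\text{strands})$ by $\sim V$ is a straightforward consequence of each grid edge touching only $O(1)$ crossing fields.
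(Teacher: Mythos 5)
Your proposal is correct and follows essentially the same route as the paper: decompose the askew projection into the $2L^2$ triangular crossing fields, split the mixed crossings into a bounded number of types according to orientation and over/under (the paper's eight types $x_1,\dots,x_8$, which is exactly the sign factorization you flag as the ``main obstacle''), and count green-over-red pairs in each field by a single height-ordered sweep with running counters (the paper's $rb$/$cf$ algorithm), with the lower bound coming from having to read the input. The only cosmetic difference is the final accounting: you bound $\sum_{\text{fields}}(\#\text{strands})$ by observing that each grid edge meets $O(1)$ fields, whereas the paper simply uses $\sim L$ time per field times $\sim L^2$ fields; both give $\sim L^3=V$.
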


\begin{proof} It is clear that $C_{lk}(3D,V)\gtrsim V$ and $C_{lk}(2D,n)\gtrsim n$: if that wasn't the case, it would mean that $lk$ could be computed without looking at parts of the yarn (in the 3D case, for the length of the yarn is $\sim V$) or at some of the crossings (in the 2D case, for there are $n$ crossings). But this is absurd: changing any crossing could change the linking number, and likewise, slightly moving any piece of yarn. Also, the standard ``sum of signs over crossings'' formula for $lk$ shows that $C_{lk}(2D,n)\lesssim n$.

So we only need to prove that $C_{lk}(3D,V)\lesssim V$. Let $\mathcal{L}$ be a grid link with size $L$ and volume $V=L^3$. There are 8 possible ``mixed'' crossing types in $\mathcal{L}$ according to the orientations of the two strands, whether the green or red strand crosses on top.

    \begin{figure}[h]
      \centering
        \includegraphics[width=.5\textwidth]{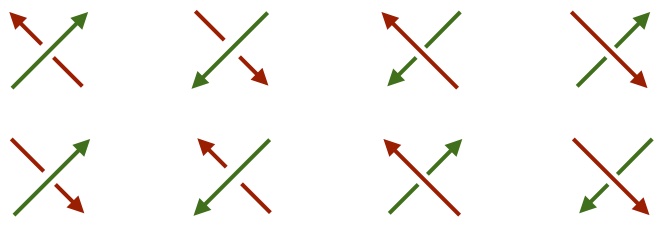}
      \caption{The 8 crossing types, where ``/" strands are green and ``\textbackslash" are red. For later use we label these crossings $x_1$ through $x_8$. }
      \label{fig:crossingTypes}
  \end{figure}
 
 The linking number is computed by counting how many crossings of each of the 8 types are in the grid knot, adding these numbers together (with signs) and dividing by 2.
 We further break this down by counting how many crossings of each type occur in a single crossing field, then sum over all the crossing fields.
 
We show here how to count instances of the first crossing listed in the diagram above where we assume the green strand comes from the link component labeled 1, and the red strand from the link component labeled 2.

The other seven cases are counted similarly and with the same computational complexity.

Fix a crossing field $F_k$ and let $G$ be a subset of the green arcs of the grid link labeled $(1,p)$, for any $p$, in $F_k$ and let $R$ be a subset of the red edges of the grid link in $F_k$ labeled $(2,p)$, for any $p$.
For any such crossing field $F_k$, we can define a height map on the sets $R$ and $G$, denoted $z:R,G \rightarrow [0,L]$, which gives the vertical strand height in $F_k$. 

Within the fixed $F_k$, to count the number of times a green strand crossed over a red strand, we need to count the number of pairs $\{(r,g)\in R\times G: z(r)<z(g)\}$. 
To do this, line up the elements of $R$ and $G$ in increasing order based on their $z$-value, as shown below.
\begin{figure}[htp]

\includegraphics[width=.5\textwidth]{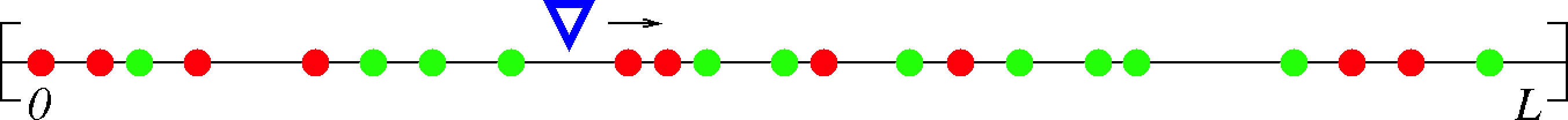}
\label{fig:gridKnot}
\end{figure}

If there is a pair $(r,g)$ of $R\times G$ such that $z(r)=z(g)$, there is a convention for which order to place the dots on the line. If we are counting greens over red, put the green dot first, and red dot second, as shown in Figure \ref{fig:Delta_comp}. If we count red crossing over green, put the red dot first and green dot second.
Start with $rb=cf=0$ (``reds before" and ``cases found") and with the place holder $\nabla$ before the leftmost dot. Slide $\nabla$ from left to right, incrementing $rb$ by one each time $\nabla$ crosses a red dot and incrementing $cf$ by $rb$ each time $\nabla$ crosses a green dot. The value of $cf$ is the desired number of pairs that we wished to count. An example of the computation is shown in Figure \ref{fig:Delta_comp}.

\begin{figure}[ht]
\includegraphics[width=.8\textwidth]{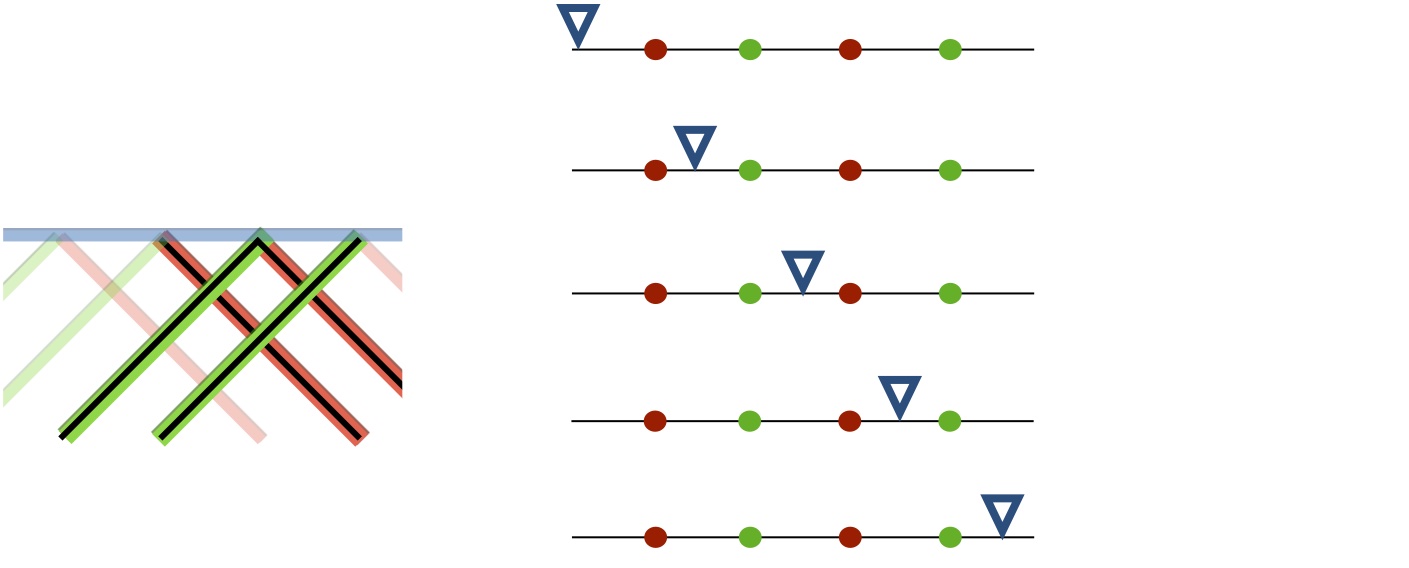}
\put(-340,89){1}
\put(-315,89){2}
\put(-290,89){3}
\put(-265,89){4}
\put(-192,118){2}
\put(-192,88){2}
\put(-192,58){2}
\put(-192,25){2}
\put(-192,-5){2}
\put(-169,118){3}
\put(-169,88){3}
\put(-169,58){3}
\put(-169,25){3}
\put(-169,-5){3}
\put(-142,118){3}%
\put(-142,88){3}
\put(-142,58){3}
\put(-142,25){3}
\put(-142,-5){3}
\put(-119,118){4}%
\put(-119,88){4}
\put(-119,58){4}
\put(-119,25){4}
\put(-119,-5){4}
\put(-85,125){$rb=cf=0$}
\put(-85,95){$rb=1$, $cf=0$}
\put(-85,65){$rb=1$, $cf=1$}
\put(-85,35){$rb=2$, $cf=1$}
\put(-85,5){$rb=2$, $cf=3$}
\caption{On the left is a portion of a grid knot passing through a crossing field where the green strands cross over the red strands. On the right, we show an example of the algorithm to count the number of crossings in this crossing field.}
\label{fig:Delta_comp}
\end{figure}

For a fixed crossing field $F_k$, this computation can be carried out in time $\sim L$. There are $\sim L^2$ crossing fields and 8 types of colored orientation crossing types, which yields an overall computation time of $\sim L^3=V$.
\end{proof}

\section{Finite Type Invariants}\label{sec:FiniteType}

A \emph{Gauss diagram} of an $n$-crossing knot diagram parameterized by an interval $I$ is given by the interval $I$ along with $n$ arrows. Each arrow corresponds to one of the $n$ crossings and connects a point in the parameter space $I$ which parametrizes the top of the crossing to a point which parametrizes the bottom of the crossing. Each arrow is also decorated with a sign corresponding to the sign of the crossing. We give an example of a Gauss diagram in Figure \ref{fig:gaussdiagrams}.

\begin{figure}[htp]

\centering
\begin{picture}(300,120)
\put(20,70){\includegraphics[width=.55\textwidth]{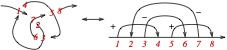}\hspace{.6cm}}
\put(20,0){\includegraphics[width=.55\textwidth]{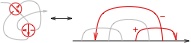}\hspace{.7cm}}

\put(0,70){(A)}
\put(0,0){(B)}

\end{picture}
\caption{(A) An example of the Gauss diagram of a tangle diagram. \\ (B) A $2$-arrow subdiagram of a Gauss diagram.}
\label{fig:gaussdiagrams}

\end{figure}

A \emph{$d$-arrow subdiagram} of a Gauss diagram $D$ is a Gauss diagram consisting of a subset of $d$ arrows from $D$.  
This subdiagram corresponds to a choice of $d$ crossings in the knot diagram represented by the Gauss diagram. An example is shown in Figure \ref{fig:gaussdiagrams}.
The space $GD=\langle $Gauss diagrams$\rangle$ is the $\Q$-vector space of all formal linear combinations of Gauss diagrams. We will denote the subspace of Gauss diagrams with $d$ or fewer arrows by $GD_d=\langle$Gauss diagrams$\rangle_d$. 

Gauss diagrams play an important role in the theory of finite type, or Vassiliev, invariants \cite{Vassiliev90, Vassiliev}. Any knot invariant $V$ taking numerical values can be extended to an invariant of knots with finitely many double points (i.e. immersed circles whose only singularities are transverse self-intersections), using the following locally defined equation:
$$V(\doublepoint)=V(\overcrossing) - V(\undercrossing)$$
The above should be interpreted in the setting of knot diagrams which coincide outside of the given crossing.
A knot invariant is a \textit{finite type invariant of type $d$} if it vanishes on all knot diagrams with at least $d+1$ double points \cite{BL93,  BN1}.

Let $\varphi_d:\{$knot diagrams$\}\rightarrow GD_d$ be the map that sends a knot diagram to the sum of all of the subdiagrams of its Gauss diagram which have at most $d$ arrows.
A Gauss diagram with $n$ arrows has $\sum_{i=1}^d$ $n\choose i$ subdiagrams with $d$ or fewer arrows. Because $n\choose i $ $\sim n^i$, a Gauss diagram with $n$ arrows has $\sum_{i=1}^d$ ${n\choose i}\sim \sum_{i=1}^d n^i\sim n^d$ subdiagrams with $d$ or fewer arrows. So $\varphi_d$ evaluated at a knot diagram with $n$ crossings will be a sum of $\sim n^d$ subdiagrams.
The map $\varphi_d$ is \emph{not} an invariant of knots, but, every finite type invariant factors through $\varphi_d$, as described in the following theorem.

\begin{thm}[Goussarov-Polyak-Viro \cite{GPV}, see also \cite{Roukema}]\label{thm:FactorsThruPhi} 
A knot invariant $\zeta$ is of type $d$ if and only if there is a linear functional $\omega$ on $GD_d$ such that $\zeta=\omega\circ\varphi_d$.
\end{thm}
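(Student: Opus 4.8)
The plan is to treat the two implications separately: the backward direction $(\Leftarrow)$ is a short cancellation argument, while the forward direction $(\Rightarrow)$ is where essentially all of the work lies.

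\emph{Backward direction.} Assume $\zeta=\omega\circ\varphi_d$; we must show $\zeta$ has type $d$, i.e.\ vanishes on every knot diagram with $d+1$ double points. Extend $\zeta$ and $\varphi_d$ to diagrams with double points via $\zeta(\doublepoint)=\zeta(\overcrossing)-\zeta(\undercrossing)$, so that for such an $S$ we have $\zeta(S)=\omega\bigl(\varphi_d(S)\bigr)$ with $\varphi_d(S)=\sum_\epsilon\operatorname{sgn}(\epsilon)\,\varphi_d(S_\epsilon)$, the sum over the $2^{d+1}$ resolutions $S_\epsilon$ of the double points and $\operatorname{sgn}(\epsilon)=\pm1$ the sign dictated by the skein relation. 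It suffices to prove $\varphi_d(S)=0$. Expand each $\varphi_d(S_\epsilon)$ as the sum of its sub-Gauss-diagrams with at most $d$ arrows and reorganize by the underlying crossing set $B$ used (a subset of the ordinary crossings of $S$ together with some of the $d+1$ double points, $|B|\le d$), noting that the crossing set of $S_\epsilon$ is the same for every $\epsilon$. Since $|B|\le d$, each such $B$ omits at least one double point $q$, and the sub-Gauss-diagram supported on $B$ does not depend on how $q$ is resolved, since that only affects over/under data at $q\notin B$. Hence the two resolutions of $q$ contribute the same diagram with opposite sign, and the sum over $\epsilon$ cancels in pairs: $\varphi_d(S)=0$, so $\zeta(S)=0$.

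\emph{Forward direction.} Assume $\zeta$ is a knot invariant of type $d$, and extend $\zeta$ and $\varphi_d$ linearly to $\Q[\{\text{knot diagrams}\}]$. Producing $\omega$ with $\zeta=\omega\circ\varphi_d$ is, by elementary linear algebra, the same as proving the kernel containment $\ker\varphi_d\subseteq\ker\zeta$: then $\zeta$ factors through $\operatorname{im}\varphi_d$ and any linear extension of that factorization to $GD_d$ serves as $\omega$. Now $\ker\zeta$ contains the span $\mathcal{R}$ of all Reidemeister relators $D-D'$ (since $\zeta$ is a knot invariant) and the span $\mathcal{S}_{d+1}$ of all $(d+1)$-fold iterated skein relators $\sum_\epsilon\operatorname{sgn}(\epsilon)D_\epsilon$ (since $\zeta$ has type $d$); conversely $\mathcal{R}$ and $\mathcal{S}_{d+1}$ together cut out exactly the type-$d$ invariants, so it suffices to prove
$$\ker\varphi_d\ \subseteq\ \mathcal{R}+\mathcal{S}_{d+1}.$$
The strategy is to filter $GD_d$ by number of arrows and strip off the top ``symbol'' inductively. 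Running the cancellation above with $m\le d$ double points shows that for an $m$-singular diagram $S$, $\varphi_d(S)$ is supported on sub-Gauss-diagrams containing \emph{all} $m$ double-point arrows plus up to $d-m$ ordinary crossings; in top degree $m=d$ this is exactly the ``arrow closure'' of the chord diagram of $S$, a construction that is visibly injective on chord diagrams. Using this, the restriction of $\zeta$ to $d$-singular knots becomes a weight-system-type functional that lifts to a linear functional $\omega^{(d)}$ on the top graded piece of $GD_d$; one then checks that $\zeta-\omega^{(d)}\circ\varphi_d$ is annihilated by all $d$-singular knots and behaves as a type-$(d-1)$ object, and induction on $d$ (base case $d=0$: $\varphi_0$ is constant and type-$0$ invariants are constants) finishes the argument.

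\emph{The main obstacle} is the bookkeeping in the inductive step of this last containment. To lift $\omega^{(d)}$ one must know that $\varphi_d(S_1)=\varphi_d(S_2)$ forces $\zeta(S_1)=\zeta(S_2)$; this uses the Vassiliev skein relation (a crossing change produces a $(d+1)$-singular term, which $\zeta$ kills) together with control of precisely which relations among chord/arrow diagrams $\varphi_d$ imposes, and it must be reconciled with the fact that $\varphi_d$ is \emph{not} Reidemeister invariant — so the intermediate functionals $\zeta-\sum_j\omega^{(j)}\circ\varphi_j$ have to be carried along at the level of $\Q[\{\text{knot diagrams}\}]$ and only recognized as knot invariants at the very end. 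This is the combinatorial core of the fundamental theorem of finite-type invariants (realizing a prescribed symbol by an actual invariant); in a self-contained treatment it is handled via actuality tables, and one may otherwise invoke that theorem and merely verify that the realization lands inside $GD_d$. The full details are in \cite{GPV}; see also \cite{Roukema}.
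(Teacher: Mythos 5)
The paper does not actually prove this statement: Theorem~\ref{thm:FactorsThruPhi} is quoted from Goussarov--Polyak--Viro \cite{GPV} (see also \cite{Roukema}) and used as a black box, so there is no in-paper argument to compare yours against. Judged on its own terms, your backward direction is complete and correct, and it is the standard argument: extending $\varphi_d$ to singular diagrams by the signed sum over resolutions, grouping the $\le d$-arrow subdiagrams by their underlying crossing set $B$, observing that any $B$ with $|B|\le d$ omits one of the $d+1$ double points $q$, and cancelling the two resolutions of $q$ in pairs. The reduction of the forward direction to the kernel containment $\ker\varphi_d\subseteq\mathcal{R}+\mathcal{S}_{d+1}$ is also a valid reformulation (for any subspace $W$, the common kernel of all functionals vanishing on $W$ is exactly $W$, so ``type $d$'' invariants are precisely the functionals killing $\mathcal{R}+\mathcal{S}_{d+1}$).

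The gap is in the forward direction, and you have in fact put your finger on exactly where it is without closing it. The symbol-stripping induction does not go through naively: after you lift the top symbol to $\omega^{(d)}$, the remainder $\zeta-\omega^{(d)}\circ\varphi_d$ is only a function on knot \emph{diagrams}, not a knot invariant, because $\varphi_d$ is not Reidemeister-invariant; so you cannot simply declare it to be of type $d-1$ and recurse. Overcoming this (via actuality tables, or via the GPV machinery of arrow diagrams and the passage through virtual/long knots, where the classical-knot case requires Goussarov's additional integration argument) is the entire substance of the theorem, and your write-up explicitly defers it to \cite{GPV}. So what you have is a correct proof of the easy implication together with an accurate roadmap of the hard one --- which, to be fair, is no less than the paper itself provides, but it is not a self-contained proof of the stated equivalence.
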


A corollary of Theorem \ref{thm:FactorsThruPhi} is that any type $d$ invariant can be computed from an $n$-crossing  planar diagram $D$ in the time that it takes to inspect all $\binom{n}{d}$ size $d$ subdiagrams of $D$ for the purpose of computing $\varphi_d$:

\begin{thm} \label{Thm:FiniteType2} (see also \cite{BN2}) If $\zeta$ is a finite type invariant of type $d$ then $C_\zeta(2D, n)$ is at most $\sim n^d$.\qed
\end{thm}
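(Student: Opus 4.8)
The plan is to use Theorem~\ref{thm:FactorsThruPhi} essentially verbatim: write $\zeta=\omega\circ\varphi_d$ for a fixed linear functional $\omega$ on the finite-dimensional space $GD_d$, and then show that the right-hand side can be evaluated on an $n$-crossing diagram $D$ in time $\sim n^d$. The key point to keep in mind throughout is that $d$ (and with it $\omega$, and the finite list of combinatorial types of Gauss diagrams with at most $d$ arrows) is a fixed parameter attached to $\zeta$, not part of the input, so any operation whose cost depends only on $d$ counts as $O(1)$.

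First I would convert $D$ into its Gauss diagram $G(D)$. Traversing the knot once along its parametrization and recording, for each of the $n$ crossings, the two parameter values of the over- and under-strand together with the sign of the crossing takes $\sim n$ steps, and sorting the resulting $2n$ endpoints along the parametrizing interval $I$ costs $\sim n\log n\sim n$. This yields a purely combinatorial description of $G(D)$.

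Next, by linearity of $\omega$ we have $\zeta(D)=\omega(\varphi_d(D))=\sum_S\omega(S)$, where $S$ ranges over all subdiagrams of $G(D)$ with at most $d$ arrows. As observed in the paragraph preceding Theorem~\ref{thm:FactorsThruPhi}, there are $\sum_{i=0}^d\binom{n}{i}\sim n^d$ such subdiagrams, and they can be enumerated in $\sim n^d$ total time by iterating over each $i\le d$ and over all $i$-element subsets of the $n$ arrows. For each subdiagram $S$ I would identify its combinatorial isomorphism type by sorting its at most $2d$ endpoints along $I$ and reading off the induced arrow matching and signs — a constant amount of work, since $d$ is fixed — and then add $\omega(S)$, obtained by a table lookup in the precomputed finite list of values of $\omega$, to a running total.

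Adding up, the cost is $\sim n$ to build $G(D)$ plus $\sim n^d$ to sweep over all subdiagrams with at most $d$ arrows, hence $\sim n^d$ overall since $d\ge 1$. The only step that deserves a careful word — and the closest thing to an obstacle here — is justifying that identifying the type of a $\le d$-arrow subdiagram and evaluating $\omega$ on it is genuinely $O(1)$; this is where we use that $d$ is a constant, so the set of combinatorial types of Gauss diagrams with at most $d$ arrows and the values of $\omega$ on them form a fixed finite data table independent of $n$. Everything else is routine bookkeeping, and the theorem follows.
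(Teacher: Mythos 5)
Your proposal is correct and follows the same route as the paper: factor $\zeta=\omega\circ\varphi_d$ via Theorem~\ref{thm:FactorsThruPhi} and enumerate the $\sim n^d$ subdiagrams with at most $d$ arrows, treating the evaluation of $\omega$ on each as constant work since $d$ is fixed. The paper leaves this as an immediate corollary of the preceding discussion; you have simply spelled out the same argument in more detail.
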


Next, we prove that in fact finite type invariants can be computed more efficiently from a 3D presentation:

\begin{thm}\label{Thm:FiniteTypeD}
If $\zeta$ is a finite type invariant of type $d$ then $C_\zeta(3D, V)$ is at most ~$V^d$.
\end{thm}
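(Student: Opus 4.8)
The plan is to reduce the problem, via Goussarov--Polyak--Viro (Theorem~\ref{thm:FactorsThruPhi}), to a counting problem, and then to carry out the count using the crossing-field structure of grid knots, extending the $\nabla$-sliding idea from the proof of Theorem~\ref{thm:lk}. Write $\zeta=\omega\circ\varphi_d$. The space $GD_d$ has a basis consisting of the Gauss diagrams with at most $d$ signed, directed arrows --- a finite set whose size depends only on $d$ --- so $\omega$ is determined by its finitely many values on these, and it suffices to show that for each such ``type'' $\gamma$ the coefficient of $\gamma$ in $\varphi_d(K)$, that is, the number $N_\gamma(K)$ of subsets of at most $d$ crossings of a generic projection of the grid knot $K$ whose induced Gauss subdiagram is $\gamma$, can be computed in time $\sim V^d$; then $\zeta(K)=\sum_\gamma\omega(\gamma)N_\gamma(K)$ costs only $\sim V^d$ more.

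The naive cost of $N_\gamma(K)$ is $\binom{n}{d}\sim n^d\sim V^{4d/3}$, obtained by inspecting all $d$-tuples among the $\sim V^{4/3}$ crossings; the improvement comes from not treating crossings as unstructured. Every crossing of $K$ lies in one of the $2L^2\sim V^{2/3}$ crossing fields and is a crossing of a green strand over a red one (or vice versa), with the sign and over/under datum --- one of the eight types $x_1,\dots,x_8$ --- read off from the two heights. Crucially there are only $\sim L^3=V$ incidences (crossing field, green strand through it). I would therefore enumerate the $\sim V^d$ ordered $d$-tuples of such incidences; each tuple fixes the positions along $K$ of the $d$ ``green endpoints'' of the crossings, cutting $K$ into $d$ arcs, and the remaining information needed to complete the count --- the red partner of each chosen green strand inside its crossing field, constrained so that the $2d$ endpoints appear along $K$ in the cyclic order prescribed by $\gamma$ with matching crossing types --- is assembled from a bounded number of one-dimensional range counts of the form ``how many red strands of this crossing field cross this green strand and have their endpoint in this sub-arc of $K$'', exactly as in the $d=1$ linking-number sweep. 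These queries are answered in polylogarithmic time from prefix-sum tables, indexed by tuples of crossing fields and by ranks of red strands within them, built once in time and space $\lesssim\sum_{k\le d}(L^2)^kL^k\sim V^d$. Together with the $\sim V^{4/3}$ cost of producing the projection (negligible once $d\ge 2$, and the $d\le 1$ case is trivial since such invariants are constant), this gives the bound $\sim V^d$.

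The main obstacle is making the last step actually come out to $\sim V^d$: when the type $\gamma$ forces two red endpoints to land in the same sub-arc of $K$ in a prescribed order --- the ``linked'' patterns, which cannot be circumvented by re-choosing which endpoint of each crossing serves as the anchor --- the residual count is no longer a product of independent one-dimensional range counts but a chain/inversion count between two lists of up to $\sim L$ red strands, and a naive treatment costs an extra factor of $\sim V^{1/3}$ per $d$-tuple. The real work is to show these correlated counts can be absorbed into the one-time precomputation (or handled by an aggregate sorting/convolution trick) without exceeding $V^d$ in total; this is a direct but fiddly generalization of the single query ``how many reds are below this green'' that does all the work when $d=1$. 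I would also note that the resulting bound is surely far from sharp --- it uses no relations in $GD_d$ and nothing special about $\zeta$ --- consistently with the remark after the theorem that it will be improved in~\cite{BBHS}.
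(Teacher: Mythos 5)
Your reduction is the same as the paper's: both pass through Goussarov--Polyak--Viro to reduce $\zeta$ to computing the finitely many coefficients of $\varphi_d(K)$, both refine the count by the eight colored crossing types, and both exploit the decomposition of the projection into $\sim L^2$ crossing fields each carrying $\le L$ strands of each color. But your counting step has a genuine gap, and it is exactly the one you flag yourself at the end. You enumerate $\sim V^d$ tuples of (crossing field, green strand) incidences and hope to finish each tuple with polylogarithmic-time range queries; you then concede that for ``linked'' patterns --- where two red endpoints must land in the same sub-arc of $K$ in a prescribed order, each additionally constrained in height against its own green partner --- the residual count is a correlated inversion count between two lists of up to $\sim L$ strands, not a product of independent one-dimensional range counts, and you leave open whether this can be ``absorbed into the one-time precomputation or handled by an aggregate sorting/convolution trick.'' That unresolved step is the entire technical content of the theorem: the simultaneous order constraints in the parameter $t$ and the height $z$ are what make the naive count expensive, and no precomputation indexed only by tuples of crossing fields and ranks obviously suffices, since the joint constraints depend on the particular height thresholds of the chosen green strands.

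The paper resolves this differently, by \emph{not} fixing the green strands in the outer enumeration. It enumerates only the $\sim L^{2d}$ tuples of crossing fields, leaving all $2d$ strand choices free, and then counts the admissible $2d$-tuples jointly. Lemma~\ref{lem:JustTRelations} handles the pure $t$-chain condition $t(b_1)<\cdots<t(b_{2d})$ by dynamic programming in time $\sim\max|B_i|$, and Proposition~\ref{Prop:CountingL^d} disposes of the $d$ height inequalities $z(b_{\alpha(j)})<z(b_{\beta(j)})$ by a dyadic decomposition: each inequality is split over the dyadic squares below the diagonal, turning it into a \emph{Cartesian} condition (a restriction of each $B_i$ separately to $B_i'$), so that Lemma~\ref{lem:JustTRelations} applies to each piece. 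The bookkeeping --- $2^{\sum q_j}$ squares each costing $\sim L/2^{\min q_j}$, summed over the $\sim 1$ choices of $\bar q$ --- shows the per-field-tuple cost is $\sim L^d$, giving $L^{2d}\cdot L^d=V^d$. This dyadic decoupling of the $z$-constraints from the $t$-chain is precisely the ``fiddly generalization'' you were missing; without it (or an equivalent device) your argument does not establish the bound.
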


\begin{proof}

Let $\zeta$ be a finite type invariant of type $d$ and let $K$ be a grid knot with side lengths $L$ viewed as a diagram from the top-down perspective as in Figure \ref{fig:grids}.
By Theorem \ref{thm:FactorsThruPhi},  $\zeta(K)=\omega\circ\varphi_d(K)$, for some linear functional $\omega$.  
Since $GD _d$ is a fixed finite-dimensional vector space, the complexity of computing $\omega$ does not depend on $K$ or $V$. Thus, to prove the theorem, it suffices to show that $\varphi_d(K)$ can be computed in time $V^d$.

  By definition, $\varphi_d(K)=\sum c_D D$ where $D$ ranges over all  possible Gauss diagrams with at most $d$ arrows, and $c_D$ is the number of times $D$ occurs as a subdiagram in the Gauss diagram for $K$. 
  The diagram $D$ has arrows decorated with a $\pm$ sign corresponding to $\pm$ crossings in $K$. 
  In a grid knot, there are 8 different realizations of oriented crossings depending on the colorings of the strands, which are shown in Figure \ref{fig:crossingTypes}. 

 To compute the coefficient $c_D$, we need to count the number of times $D$ occurs as a subdiagram of $K$. Since $K$ is a grid knot, we will subdivide this count by further specifying what type of $\pm$ crossing is associated to each $\pm$ arrow in $D$. 
 To do this, we need to consider a more detailed labeling of Gauss diagrams.
 Let $LGD_d=\langle$Labeled Gauss Diagrams$\rangle_d$ be the space of Gauss diagrams with at most $d$ arrows where each arrow is decorated with a label in $\{x_1,\cdots, x_8\}$.
 These labels will denote the \emph{crossing type of an arrow}.
  $LGD_d$ is large but finite dimensional, and for each diagram $D\in GD_d$ with $\ell$ arrows, there are $8^\ell$ related diagrams $D_\chi \in LGD_d$ where $\chi \in\{x_1,\cdots, x_8\}^{\ell}$ is a sequence specifying the labelling of the arrows of $D_\chi $. 
  Thus, $c_D$ can be computed by 
  $$c_D=\sum_{\chi }c_{D_\chi }$$ where $\chi $ ranges over all sequences in $\{x_1,\cdots x_8\}^{\ell}$. The coefficient $c_{D_\chi }$ is the number of times $D_\chi $ occurs as a subdiagram of $K$ where an arrow $j$ in $D_\chi $ with label $\chi _j$ corresponds to a crossing in $K$ of type $\chi _j$.

 To compute $\varphi_d(K)$ it suffices to compute $c_{D_\chi }$ for every $D_\chi \in LGD_d$. The following argument will compute $c_{D_\chi }$ and will be repeated for every $D_\chi \in LGD_d$. Such repetition will not contribute to the complexity of $\varphi_d$ up to $\sim$ as $LGD_d$ is a fixed finite dimensional space independent of $K$.
  
Let $D_\chi \in LGD_d$ be a labeled Gauss diagram with  $\ell\leq d$ arrows. The most computationally difficult case is when $\ell=d$, so we will count instances of diagrams with $\ell=d$ and all other counts will be similar and easier. We count all instances of $D_\chi $ that fall into specific crossing fields of the grid knot $K$. 

Number each arrow of $D_\chi $ with $j\in\{1,\cdots, d\}$, in the order in which the arrows first
occur from left to right in $D_\chi $.
As a labeled Gauss diagram, each arrow $j$ of $D_\chi $ is also decorated with $\chi _j\in \{x_1,\cdots, x_8\}$ which specifies the crossing type associated to arrow $j$. 
We also label the ends of the arrows. 
Following the example in Figure \ref{fig:labeledGauss}, label the head of arrow $j$ with $\alpha (j)\in \{1,\cdots, 2d\}$ and each tail by $\beta(j)\in \{1,\cdots, 2d\}$ in increasing order according to the parametrization. We will count the occurrences of $D_\chi $ in $K$ by scanning the grid knot in the following way:

1. We consider all possible $d$-tuples $\overline{F}=(F_{k_1},\hdots,F_{k_d})$ of crossing fields in $K$. For such a tuple, the $j$th arrow in $D_\chi $ gets assigned the crossing field denoted $F_{k_j}$. This choice specifies that the arrow $j$ corresponds to a crossing of type $\chi_j$ which occurs in the crossing field $F_{k_j}$ in the grid knot diagram for $K$ (if such a crossing exists in $F_{k_j}$, see Example \ref{ex:Buckets}). 
There are $\sim L^2$ choices of crossing fields for each of the $d$ arrows, so there  $\sim L^{2d}$ possible choices of crossing field tuples for $D_\chi$.

2. For any $d$-tuple $\overline{F}$ of crossing fields, we also assign to each arrow of $D_\chi$ a pair of sets: $B_i$, associated to the head, and $B_{i'}$ associated to the tail of the arrow. The index $i$ of $B_i$ will not necessarily match the label $j$ of the arrow, but rather the labelling of the $B_i$'s is according to the order in which the ends of the arrows appear along the parametrization, as in Figure \ref{fig:labeledGauss}.
Thus, the set $B_i$ associated to the head of arrow $j$ will have $i=\alpha(j)$, and the $B_{i'}$ associated to the tail of arrow $j$ will have $i'=\beta(j)$.

\begin{figure}[htp]
\begin{picture}(300,70)
\put(30,10){\includegraphics[width=.5\textwidth]{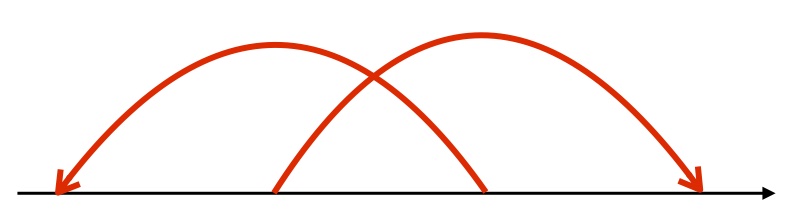}\hspace{.6cm}}
\put(20,7){$\alpha(1)=1$}
\put(40,-5){$B_1$}
\put(80,7){$\beta(2)=2$}
\put(100,-5){$B_2$}
\put(100,65){$\chi_1$}
\put(160,65){$\chi_2$}
\put(140,7){$\beta(1)=3$}
\put(160,-5){$B_3$}
\put(200,7){$\alpha(2)=4$}
\put(220,-5){$B_4$}
\put(45,40){$F_{k_1}$}
\put(210,40){$F_{k_2}$}
\end{picture}
\caption{An example of a labeled Gauss diagram with $d=2$.}
\label{fig:labeledGauss}
\end{figure}

 The sets $B_i$ and $B_{i'}$ for arrow $j$ will depend on both the crossing field $F_{k_j}$ and the crossing type $\chi_j$ of the arrow. 
 The crossing type $\chi_j$ determines whether we count green strands on top of red, or red on top of green, and which orientation of the strands to count.
 In general, $B_i$ will be the set of strands of the knot in $F_{k_j}$ that have the same color and orientation as the under strand in crossing type $\chi_j$. The set $B_{i'}$ will be the strands of the knot in $F_{k_j}$ that have the same color and orientation as the over strands in crossing type $\chi_j$.

\begin{figure}[h]
    \centering
    \includegraphics[width=.55\textwidth]{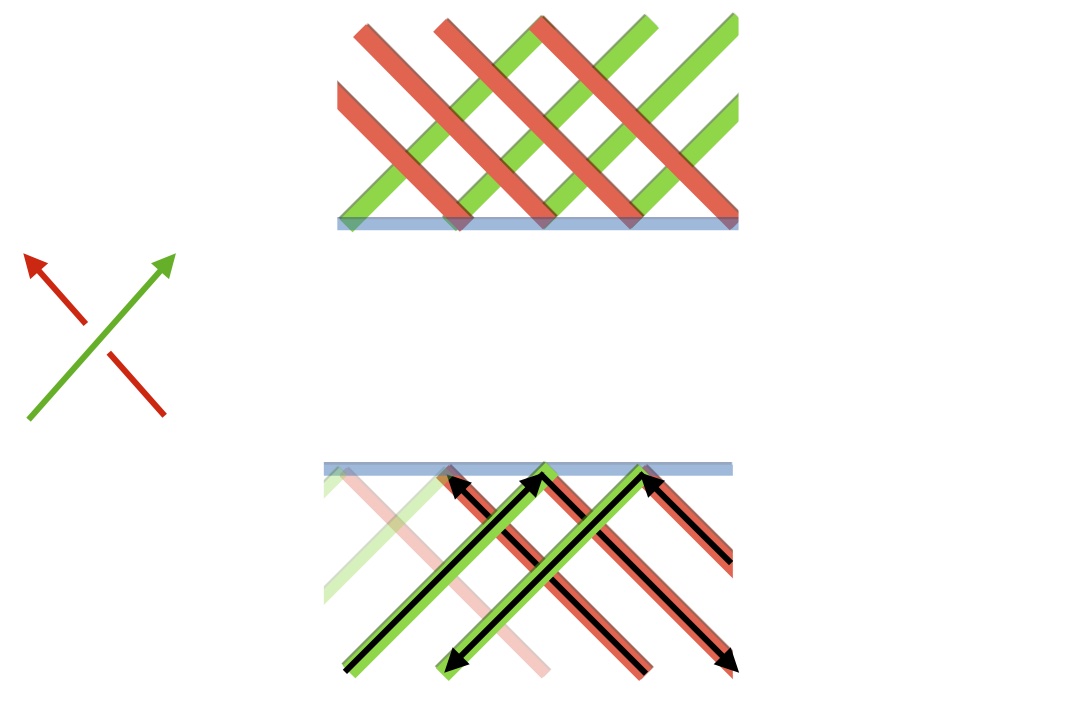}
    \put(-225,57){$x_1$}
    \put(-60, 130){$\longrightarrow$}
    \put(-60, 40){$\longrightarrow$}
    \put(-175, 165){$F_{k_j}$ with red on top}
    \put(-175, 65){$F_{k_j}$ with green on top}
    \put(-172,3){$g_1$}
    \put(-150,3){$g_2$}
    \put(-103,3){$r_1$}
    \put(-77,10){$r_2$}
    \put(-77,32){$r_3$}
    \put(-30, 140){$B_i=\emptyset$}
    \put(-30,120){$B_{i'}=\emptyset$}
    \put(-30, 50){$B_i=\{r_1, r_3\}$}
    \put(-30,30){$B_{i'}=\{g_1\}$}
    \caption{An example of computing the sets $B_i$ and $B_{i'}$ for a crossing of type $\chi_i=x_1$ and two different crossing fields.}
    \label{fig:BucketExample}
\end{figure}

\begin{ex}\label{ex:Buckets}
Suppose $\chi_j=x_1$ and refer to Figure \ref{fig:BucketExample}. The crossing type $x_1$ determines we are counting green and red arrows both oriented upwards, with green on top.
We look to the crossing field $F_{k_j}$ of our knot diagram for $K$.
In $F_{k_j}$, if all of the red strands cross on top, then both $B_i$ and $B_{i'}$ are empty, as the crossings are not compatible with the $x_1$ type crossing.
If the green strands in $F_{k_j}$ cross on top,
then the set $B_i$, associated to the head of arrow $j$,  will be all of the red strands of the knot in $F_{k_j}$ oriented upwards.
The set $B_{i'}$, associated to the tail of arrow $j$, will be all of the green strands of the knot in $F_{k_j}$ oriented upwards.
\end{ex}

3. With the crossing fields chosen and the $B_i$'s assigned, we look for instances of $D_\chi$ within the grid knot, by choosing a strand $b_i$ in each $B_i$ and checking if the collection of chosen strands $\{b_1,\cdots, b_{2d}\}$ is compatible with the parametrization and gives rise to exactly $d$ crossings in $K$ that recover the diagram $D_\chi$. To check this compatibility, there are two functions defined on the sets $B_i$.  \begin{itemize}
    \item $t:\cup B_i\rightarrow \mathbb{Z}$ gives the order in which the arrow endpoints labeled by the $B_i$'s occur in the parametrization of the grid diagram for $K$.
    \item $z:\cup B_i\rightarrow \{0,\cdots,L\}$ gives the vertical height of the strand in the grid knot. Notice that  for each $i$, the elements of $B_i$ are strands of the same color inside a single crossing field, all of which have distinct heights. So $z|_{B_i}$ is injective for every $i$.
\end{itemize}

First, we need the crossings to occur in the correct order along the grid knot according to the parametrization. This means we need $t(b_1)<t(b_2)<...<t(b_{2d})$.
Second, we need to ensure that the chosen strands for a given arrow in the diagram actually cross. For an arrow $j$, $b_{\beta(j)}$ needs to be the over strand of the crossing and $b_{\alpha({j})}$ is the under strand. To get that $b_{\beta({j})}$ crosses over $b_{\alpha(j)}$, we need the height of $b_{\alpha(j)}$ to be lower than the height of $b_{\beta(j)}$, i.e. $z(b_{\alpha(j)})<z(b_{\beta(j)})$.
Notice that $b_{\alpha(j)}$ and $b_{\beta(j)}$ can be elements from different $B_i$'s.

 This problem reduces to the following counting problem:
Given  $\alpha(j), \beta(j)\in \{1,\cdots, 2d\}$ for $j\in \{1,\cdots, d\}$, the $2d$ sets $B_i$ with $i\in \{1,\cdots, 2d\}$ and functions $t:\cup B_i\rightarrow\mathbb{Z}$ and $z:\cup B_i\rightarrow\mathbb\{0,\cdots, L\}$ such that $z|_{B_i}$ and $t$ are injective, we want to compute $|A|$ where 

\[
  A=\left \{
  \raisebox{0mm}{$b\in(b_i)^{2d}_{i=1}\in\prod_i B_i$}
  \left|
  \parbox{2.25in}{\centering $t(b_1)<t(b_2)<\cdots <t(b_{2d})$,\\  $\forall j\in \{1,\cdots,d\}, z(b_{\alpha(j)})<z(b_{\beta(j)})$ }
  \right.\right\}.
\]

\noindent In Section \ref{sec:comb} we will prove Proposition \ref{Prop:CountingL^d}, which asserts that this computation can be carried out in time ~$L^d$.

Since we had $\sim L^{2d}$ choices of $d$-tuples of crossing fields, a large but finite constant number of labeled Gauss diagrams with $d$ or fewer arrows, and for each choice we have $L^d$ computations to find instances of this diagram in the knot, we get a total computation time of $\sim L^{2d}L^d=V^d$ as claimed.
\end{proof}

\section{Combinatorial Results}\label{sec:comb}

In this section, we prove Proposition \ref{Prop:CountingL^d}, which was used in the proof of Theorem \ref{Thm:FiniteTypeD}. We start with the following lemma.

\begin{lem}\label{lem:JustTRelations}
Suppose we have sets $B_i$, for $i\in\{1,\cdots,2d\}$, and a map $t:\cup B_i\rightarrow \mathbb{N}$ such that $t|_{B_i}$ is injective for all $i$. Let $K\coloneqq\max(|B_i|)$. Then the quantity
\[
N=\left|\left\{ b=(b_i)^{2d}_{i=1}\in\prod^{2d}_{i=1} B_i: t(b_1)<t(b_2)<\cdots<t(b_{2d})\right\}\right|
\]
can be computed in time $\sim K$.
\end{lem}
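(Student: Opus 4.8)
The plan is to compute $N$ by a single left-to-right sweep through the elements of $\cup B_i$ in increasing $t$-order, maintaining a running vector of partial counts. Concretely, order all elements $x \in \cup B_i$ so that $t(x_1) < t(x_2) < \cdots$; since $t$ is injective on each $B_i$ but there are $2d$ sets, the total number of elements is $\le 2d \cdot K \sim K$, and sorting them costs $\sim K$ up to log factors (which we suppress). For $m \in \{0, 1, \dots, 2d\}$ let $N_m$ be the number of ascending chains $(b_1, \dots, b_m) \in B_1 \times \cdots \times B_m$ with $t(b_1) < \cdots < t(b_m)$ and, crucially, $t(b_m) \le t$ of the element currently being scanned. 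We want $N_{2d}$ at the end. Initialize $N_0 = 1$ and $N_1 = \cdots = N_{2d} = 0$.

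The key step is the update rule. When the sweep reaches an element $x$ belonging to $B_i$, every chain counted so far that ends in $B_{i-1}$ (or is the empty chain, if $i=1$) at some element with strictly smaller $t$-value can be extended by appending $x$, producing a new chain ending in $B_i$ at $x$. Because we process elements in strictly increasing $t$-order and $N_{i-1}$ already records exactly the count of valid length-$(i-1)$ chains whose last element precedes $x$, the correct update is simply $N_i \mathrel{+}= N_{i-1}$, performed in place, and all other $N_m$ are unchanged. One subtlety to state carefully: since we increment $N_i$ using the current value of $N_{i-1}$, and $N_{i-1}$ is only modified when we scan an element of $B_{i-1}$ (a different set), there is no aliasing problem even if $x$ lies in several conceptual positions — each physical element lies in exactly one $B_i$, so exactly one coordinate is touched per element. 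After the final element is scanned, output $N_{2d}$.

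For correctness I would argue by the following invariant, proved by induction on the number of elements scanned: after processing the first $s$ elements $x_1, \dots, x_s$ in $t$-order, $N_m$ equals the number of tuples $(b_1, \dots, b_m) \in \prod_{i=1}^m B_i$ with $t(b_1) < \cdots < t(b_m)$ and $t(b_m) \in \{t(x_1), \dots, t(x_s)\}$. The base case $s = 0$ is immediate. For the inductive step, scanning $x_{s+1} \in B_i$: a valid length-$i$ chain ending at an element with $t$-value in $\{t(x_1),\dots,t(x_{s+1})\}$ either ends at an element with $t$-value $\le t(x_s)$ (counted by the old $N_i$) or ends exactly at $x_{s+1}$, and the latter are in bijection with valid length-$(i-1)$ chains whose last $t$-value is $< t(x_{s+1})$, i.e. lies in $\{t(x_1), \dots, t(x_s)\}$ — that count is the old $N_{i-1}$, unchanged so far at this step since $x_{s+1} \notin B_{i-1}$. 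Chains of length $m \ne i$ are unaffected because their final element lies in $B_m \ne B_i$. Hence the update $N_i \mathrel{+}= N_{i-1}$ restores the invariant, and at $s = |\cup B_i|$ the value $N_{2d} = N$.

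The complexity is $\sim K$: sorting is $\sim K$ (suppressing logs, as the paper's convention allows), and the sweep does $O(1)$ arithmetic operations per element for a total of $O(d \cdot K) \sim K$, since $d$ is a fixed constant. The only genuine obstacle is bookkeeping rather than mathematics: being precise about the tie-handling (here $t$ is globally injective, so no ties arise among distinct elements, but elements of different $B_i$ sharing a $t$-value would need the convention mentioned in the paper — I would note that $t$ injective makes this moot, or sort by $(t$-value, index$)$ if one wants robustness), and making sure the in-place update order of the $N_i$ causes no double counting, which the argument above handles by observing that each scanned element touches exactly one coordinate. I expect this to be the main thing to get right in the write-up, but it is routine.
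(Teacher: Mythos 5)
Your algorithm is essentially the same dynamic program as the paper's: the update $N_i \leftarrow N_i + N_{i-1}$, executed along a global sweep of $\cup B_i$ in increasing $t$-order, is exactly the paper's recurrence $N_{\iota,\tau}=N_{\iota,\tau-1}+N_{\iota-1,\tau'}$ unrolled over a merged sorted list instead of computed level by level with a binary search for $\tau'$. Your correctness invariant and the $\sim K$ complexity count are both right, and the difference in organization (one global pass versus $2d$ per-level passes) is immaterial up to the paper's $\sim$ convention.

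The one genuine slip is the tie-handling, which you flag but then dismiss on a false premise. The lemma assumes only that $t|_{B_i}$ is injective for each $i$, not that $t$ is injective on $\cup B_i$; elements of different $B_i$'s may share a $t$-value, and the chain condition $t(b_1)<\cdots<t(b_{2d})$ is strict. If $y\in B_{i-1}$ and $x\in B_i$ satisfy $t(y)=t(x)$ and your sort places $y$ before $x$, then when you scan $x$ the counter $N_{i-1}$ already includes chains ending at $y$, and you overcount, since $t(y)\not<t(x)$. Your fallback of sorting by the pair ($t$-value, index) repairs this only if ``index'' means the set index $i$ taken in \emph{decreasing} order among ties, so that $x\in B_i$ is processed before $y\in B_{i-1}$ whenever $t(x)=t(y)$; increasing order makes the bug worse. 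The paper sidesteps the issue because its $\tau'$ is defined directly by the strict inequality $t_{\iota-1}(\tau'')<t_\iota(\tau)$. With the tie-breaking stated correctly (or with the separate observation that in the application of this lemma $t$ records positions along the knot's parametrization and happens to be globally injective, though the lemma as stated does not grant this), your proof is complete.
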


\begin{proof}
In time $\sim K$ each of the $B_i$'s can be sorted by the values of $t$ on it and replaced by its indexing interval. So without loss of generality, each $B_i$ is just a list of integers $\{1,\ldots,|B_i|\}$, the function $t$ is replaced by increasing functions $t_i\colon\{1,\ldots,|B_i|\}\to{\mathbb N}$, and we need to count
\[
N=\left|\left\{ b=(b_i)^{2d}_{i=1}\in\prod^{2d}_{i=1} \{1,\ldots,|B_i|\}: t_1(b_1)<t_2(b_2)<\cdots<t_{2d}(b_{2d})\right\}\right|
\]
For $1\leq\iota\leq 2d$ and $1\leq\tau\leq|B_\iota|$, let
\[N_{\iota,\tau}=\left|\left\{ b=(b_i)^{\iota}_{i=1}\in\prod_{i=1}^\iota \{1,\ldots,|B_i|\}: t_1(b_1)<t_2(b_2)<\cdots<t_\iota(b_{\iota})\leq t_\iota(\tau)\right\}\right|,\]
i.e. $N_{\iota,\tau}$ is the number of sequences of length $\iota$ with increasing $t$-values that terminate at a value less than or equal to $t_\iota(\tau)$. Also set $N_{\iota,0}=0$ for all $\iota$. Then clearly $N=N_{2d,|B_{2d}|}$, $N_{1,\tau}=\tau$ for all $\tau$ and for $\iota>1$,
\[ N_{\iota,\tau}=N_{\iota,\tau-1} + N_{\iota-1,\tau'}, \]
where $\tau'=\max(\{0\}\cup\{\tau''\colon t_{\iota-1}(\tau'')<t_\iota(\tau)\})$. Note that $\tau'$ can be computed in time $\log K\sim 1$ and hence the $N_{\iota,-}$'s can be computed from the $N_{\iota-1,-}$'s in time $\sim K$. To find the $N_{2d,-}$'s and hence $N$ this process needs to be repeated $2d\sim 1$ times, and the overall computation time remains $\sim K$.
\end{proof}

\begin{prop}\label{Prop:CountingL^d}
Given a collection of $2d$ sets $B_i$ with $i\in \{1,\cdots, 2d\}$, and functions $\alpha, \beta: \{1,\cdots, d\} \rightarrow\{1,\cdots, 2d\}$, $t:\cup B_i\rightarrow\mathbb{Z}$ and $z:\cup B_i\rightarrow\{0,\cdots, L\}$ such that $im{(\alpha)}\cup im{(\beta)}=\{1,\hdots,2d\}$, and $z|_{B_i}$ and $t|_{B_i}$ are injective, the size of the following set can be computed in time $\sim L^d$, 
\[
  A=\left\{
  \raisebox{0mm}{$b\in(b_i)^{2d}_{i=1}\in\prod B_i$}
  \left|
  \parbox{2.5in}{\centering $t(b_1)<t(b_2)<\cdots <t(b_{2d}),$\\  $\forall j\in \{1,\cdots,d\}, \; z(b_{\alpha(j)})<z(b_{\beta(j)})$ } \right.
  \right\}.
\]

\end{prop}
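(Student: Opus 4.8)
The plan is to compute $\abs{A}$ by a left-to-right dynamic-programming sweep over the indices $1,2,\ldots,2d$ in the order forced by the $t$-chain, carrying the cumulative-count device of Lemma~\ref{lem:JustTRelations} but enriched so as to also enforce the $d$ inequalities in $z$. Two preliminary observations set things up. First, since $\operatorname{im}(\alpha)\cup\operatorname{im}(\beta)=\{1,\ldots,2d\}$ while $\abs{\operatorname{im}(\alpha)},\abs{\operatorname{im}(\beta)}\le d$, both $\alpha,\beta$ are injective and $\{1,\ldots,2d\}$ is partitioned into the $d$ blocks $\{\alpha(j),\beta(j)\}$; write $u_j=\min(\alpha(j),\beta(j))$, $v_j=\max(\alpha(j),\beta(j))$, call $u_j$ the \emph{source} and $v_j$ the \emph{sink} of arrow $j$, so that the $z$-inequality for arrow $j$ reads $z(b_{u_j})<z(b_{v_j})$ or $z(b_{u_j})>z(b_{v_j})$ according as $u_j=\alpha(j)$ or $u_j=\beta(j)$. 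Second, since $z|_{B_i}$ is injective and $\abs{B_i}\le L+1\sim L$, fixing the number $z(b_i)$ determines $b_i$ whenever that value is attained in $B_i$. Call arrow $j$ \emph{open after step $\iota$} if $u_j\le\iota<v_j$, and let $o(\iota)\in\{0,\ldots,d\}$ count the open arrows; $o$ rises by one at each source and falls by one at each sink, so position $1$ is a source, position $2d$ is a sink with $o(2d)=0$, and $o(\iota)\le d-1$ whenever $\iota$ is a sink.

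The DP table is $N_\iota(b_\iota,\vec h)$: the number of tuples $(b_1,\ldots,b_\iota)\in B_1\times\cdots\times B_\iota$ with $t(b_1)<\cdots<t(b_\iota)$, satisfying the $z$-inequality of every arrow whose sink is $\le\iota$, ending at $b_\iota$, and with $\vec h=(z(b_{u_j}))_{j\text{ open after }\iota}$ the recorded source-heights of the currently open arrows. Then $\abs{A}=\sum_{b\in B_{2d}}N_{2d}(b,\emptyset)$, found in $\sim L$ steps once $N_{2d}$ is known. The two observations give the table-size bound that drives the whole argument: when $\iota$ is a source its last recorded height is $z(b_\iota)$, which already pins $b_\iota$, so $N_\iota$ has $\lesssim L^{o(\iota)}\le L^d$ nonzero entries; when $\iota$ is a sink, $b_\iota$ is free but $\vec h$ has only $o(\iota)\le d-1$ entries, so again $\lesssim L^{o(\iota)+1}\le L^d$ entries. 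Hence every table in the sweep has size $\sim L^d$ at worst.

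It remains to pass from $N_{\iota-1}$ to $N_\iota$ in time $\sim L^d$, and there are two moves. If $\iota$ is a source of arrow $j_0$, then choosing $b_\iota$ means choosing its height $w_0=z(b_\iota)$, and $N_\iota(b_\iota,(\vec h',w_0))=\sum_{b_{\iota-1}:\,t(b_{\iota-1})<t(b_\iota)}N_{\iota-1}(b_{\iota-1},\vec h')$; this is exactly the situation of Lemma~\ref{lem:JustTRelations} --- sort $B_{\iota-1}$ by $t$, form for each fixed $\vec h'$ the running sum of $N_{\iota-1}(\,\cdot\,,\vec h')$, and read it at the threshold $t(b_\iota)$, reusing the running sum over all $\sim L$ choices of $b_\iota$. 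If $\iota$ is a sink of arrow $j_0$ (with source $u_{j_0}<\iota$ carrying recorded height $w_0$), then $N_\iota(b_\iota,\vec h)=\sum_{w_0}\sum_{b_{\iota-1}}N_{\iota-1}(b_{\iota-1},(\vec h,w_0))$, with $b_{\iota-1}$ running over $B_{\iota-1}$ with $t(b_{\iota-1})<t(b_\iota)$ and $w_0$ over the heights obeying arrow $j_0$'s inequality against $z(b_\iota)$; both are one-sided cumulative sums (in $t$, resp.\ in $w_0$), handled with the same prefix-sum device. The one delicate case is $\iota$ a sink whose preceding position $\iota-1$ is the source of the \emph{same} arrow $j_0$: there $t(b_{\iota-1})<t(b_\iota)$ and $z(b_{\iota-1})\lessgtr z(b_\iota)$ both constrain the single free index $b_{\iota-1}$ at once, so after fixing $\vec h$ the required sum is an offline two-dimensional dominance count over the $\sim L$ points $(t(b_{\iota-1}),z(b_{\iota-1}))$, done in $\sim L\log L\sim L$ per value of $\vec h$. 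A short bookkeeping check --- exploiting that a source raises $o(\iota)$ but makes $b_\iota$ redundant, while a sink keeps $b_\iota$ free but lowers $o(\iota)$ --- confirms that no move exceeds $\sim L^d$; with $2d\sim 1$ moves, the total is $\sim L^d$.

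The hard part is precisely this last accounting: organizing the transitions --- above all the adjacent source-then-sink one --- so that prefix-sum reuse keeps each step at $\sim L^d$ rather than the naive $\sim L^{d+1}$ one gets by re-scanning $B_{\iota-1}$ (or the range of $w_0$) afresh for every value of the remaining coordinates.
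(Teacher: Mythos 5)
Your proof is correct, but it takes a genuinely different route from the paper's. The paper never builds a dynamic program over the positions $1,\ldots,2d$; instead it decomposes each height inequality $z(b_{\alpha(j)})<z(b_{\beta(j)})$ dyadically, covering the triangle $\{z_1<z_2\}$ by squares labelled by binary prefixes $\sigma_j$, so that inside a fixed square the condition becomes a \emph{Cartesian} one (a separate constraint on $b_{\alpha(j)}$ and on $b_{\beta(j)}$ individually). This writes $A$ as a union of sets $A_{\bar\sigma}$ to each of which Lemma~\ref{lem:JustTRelations} applies verbatim with shrunken sets $B_i'$ of size $\lesssim L/2^{q_j}$; the bound $\sim L^d$ then comes from trading the number $2^{\sum q_j}$ of squares against these shrunken sizes, the worst case being all $q_j$ maximal but one. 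Your sweep instead keeps the $z$-constraints intact and pays for them in the state space, namely the recorded heights of the sources of the open arrows. Both analyses bottom out in the same prefix-sum device, and both land on $L^d$, but they buy different things: the paper's reduction is modular (everything funnels through the $t$-only lemma) and treats the $d$ constraints symmetrically, while your DP's true cost is governed by $\max_\iota o(\iota)$, the maximum number of simultaneously open arrows, so for Gauss diagrams of small ``width'' it silently gives a better exponent than $d$ --- an improvement in the spirit of what the authors defer to~\cite{BBHS}. The one place your write-up leans on a wave of the hand is the final accounting; it does close, and the key inequality worth recording is the worst transition, a sink $\iota$ whose predecessor $\iota-1$ is also a sink: there the two-dimensional prefix-sum preprocessing costs $\sim L^{o(\iota)+2}$, which stays $\leq L^d$ precisely because $o(\iota)+1=o(\iota-1)\leq d-1$.
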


\begin{proof}  Lemma \ref{lem:JustTRelations} shows us how to count elements in the set $A$ without the conditions on $z$. We will show that $|A|$ can be computed by writing $A$ as a union of sets with the $t$-conditions and \emph{one Cartesian condition}, so that we can apply  Lemma \ref{lem:JustTRelations} to count elements in each set of the union.

Let $p\in \mathbb{N}$ so that $2^{p-1}<L\leq2^p$. Every $z$-value is in $\{0, \cdots, L\}$ and can be written as a binary expansion with exactly $p$ binary digits, padding with zeros in front if needed. 
For example, if $p=5$ the number 2 can be written as 00010.
For two binary numbers $z_1$ and $z_2$ in this form,  $z_1< z_2$ if they have the same binary expansions from left to right up to a point, and in the first place they differ $z_1$ has a 0 and $z_2$ has a 1. The notation we use to describe this is as follows. 
Let $\sigma$ be the binary sequence from left to right which $z_1$ and $z_2$ share in common. We write $z_1=\sigma 0*$ to mean the binary expansion of $z_1$ read left to right is $\sigma$ followed by 0 followed by an arbitrary remainder of 0's and 1's. Similarly, we write $z_2=\sigma 1 *$.

For each $j\in \{1,\cdots,d\}$, if the condition $z(b_{\alpha(j)})<z(b_{\beta(j)})$ is satisfied, there exists a binary sequence $\sigma_j$ of length $|\sigma_j|<p$ so that $z(b_{\alpha(j)})=\sigma_j 0*$ and $z(b_{\beta(j)})=\sigma_j 1*$. 
This binary structure is key to construct the desired \emph{Cartesian} conditions to compute $|A|$.

For each $j\in \{1,\cdots,d\}$, the relation $z(b_{\alpha(j)})<z(b_{\beta(j)})$ requires the pair $(z(b_{\alpha(j)}),z(b_{\beta(j)}))$ to be in the triangle below the diagonal shown in Figure \ref{fig:approximatingTirangle}.
Notice that in a single pair, $(z(b_{\alpha(j)}),z(b_{\beta(j)}))$, the $b_{\alpha(j)}$ and $b_{\beta(j)}$ are not necessarily elements of the same $B_i$.
Since the outputs of $z$ are integral, we can divide the triangle into a finite number of squares below the diagonal. 
The sides of the squares in the triangle are labeled by binary sequences of length less than or equal to $ p$. Each square is determined by all but the last entry in the sequences labeled on the right and bottom sides of the square.

\begin{figure}[htp]
\begin{picture}(100,175)
\put(-50,0){\includegraphics[width=.5\textwidth]{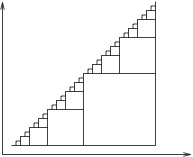}}
\put(50,85){$q=0$}
\put(10,45){$q=1$}
\put(90,125){$q=1$}
\put(150,10){$z(b_{\beta(j)})$}
\put(-40,170){$z(b_{\alpha(j)})$}

\put(125,-12){$2^p$}
\put(125,0){$|$}
\put(85,15){\textcolor{red}{1}}
\put(120,50){\textcolor{red}{0}}

\put(115,110){\textcolor{green}{1}\textcolor{red}{0}}
\put(100,96){\textcolor{green}{1}\textcolor{red}{1}}

\put(71,96){\textcolor{green}{\tiny 10}\textcolor{red}{\tiny 1}}
\put(73,108){\textcolor{green}{\tiny 10}\textcolor{red}{\tiny 0}}

\put(114,149){\textcolor{green}{\tiny 11}\textcolor{red}{\tiny 0}}
\put(110,137){\textcolor{green}{\tiny 11}\textcolor{red}{\tiny1}}

\put(33,33){\textcolor{green}{0}\textcolor{red}{0}}
\put(22,15){\textcolor{green}{0}\textcolor{red}{1}}

\put(33,66){\textcolor{green}{\tiny01}\textcolor{red}{ \tiny 0}}
\put(28,56){\textcolor{green}{ \tiny 01}\textcolor{red}{\tiny 1}}

\put(-9,25){\textcolor{green}{\tiny 00}\textcolor{red}{\tiny 0}}
\put(-13,15){\textcolor{green}{\tiny 00}\textcolor{red}{\tiny 1}}
\end{picture}
\caption{The sides of each square are labeled by  binary sequences which describe the set of integral pairs within the square. For example, the square with south side labeled by \textcolor{green}{1}\textcolor{red}{1} and east side labeled by \textcolor{green}{1}\textcolor{red}{0} contains pairs of the form $(z(b_{\alpha(j)}), z(b_{\beta(j)}))$ with $z(b_{\alpha(j)})=$\textcolor{green}{1}$\cdot 2^{p-1}+$\textcolor{red}{0}$\cdot 2^{p-2}+\cdots$ and $z(b_{\beta(j)})=$\textcolor{green}{1}$ \cdot 2^{p-1}+$\textcolor{red}{1}$\cdot 2^{p-2}+\cdots$.}
\label{fig:approximatingTirangle}
\end{figure}

Since $j$ can range from $1$ to $d$, there are $d$ relations $z(b_{\alpha(j)})<z(b_{\beta(j)})$ that must simultaneously hold. So, we consider $d$ copies of the above triangle, one for each $j$. We will write $A$ as a union of sets with the $t$-conditions and one Cartesian condition corresponding to a specific square in each of the triangles.

The collection of squares on a subdiagonal all have the same size and are labeled by binary sequences of the same length. To each subdiagonal, we can associate the $q$-value $$q=\text{(the length of the binary sequences on the subdiagonal)}-1.$$ 
The subdiagonals with $q$-values $0$ and $1$ are shown in Figure \ref{fig:approximatingTirangle}.

Let $\bar \sigma=(\sigma_j)_{j=1}^d$ be a $d$-tuple of binary sequences, where the length of $\sigma_j$ is $|\sigma_j|=q_j\in \{0,\cdots,p-1\}$. As demonstrated in Figure \ref{fig:triangles}, the coordinates of $\bar \sigma$ pick out a specific square in each of the $d$ triangles, ($\sigma_j$ is a binary sequence that labels a square in triangle $j$).

\begin{figure}[htp]
\begin{picture}(80,150)
\put(-40,0){\includegraphics[width=.35\textwidth]{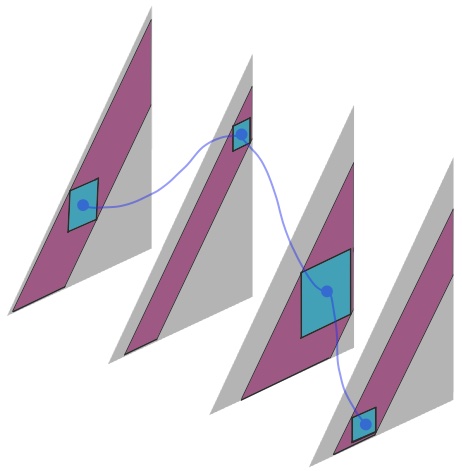}}
\put(76,0){\textcolor{teal}{$\sigma_1$}}
\put(80,65){\textcolor{teal}{$\sigma_2$}}
\put(45,110){\textcolor{teal}{$\sigma_3$}}
\put(-30,95){\textcolor{teal}{$\sigma_4$}}

\put(115,80){\textcolor{purple}{\large{$q_1$}}}
\put(80,90){\textcolor{purple}{\large{$q_2$}}}
\put(45,120){\textcolor{purple}{\large{$q_3$}}}
\put(12,140){\textcolor{purple}{\large{$q_4$}}}
\end{picture}

\caption{A schematic diagram of $\bar \sigma=(\sigma_1,\cdots, \sigma_4)$ and $\bar q=(q_1,\cdots, q_4)$. The coordinate $\sigma_j$ specifies a square in the $j$th triangle, on the subdiagonal corresponding to $|\sigma_j|=q_j$.}
\label{fig:triangles}
\end{figure}

The length of $\sigma_j$, $q_j$, is the $q$-value for the subdiagonal that contains the square labeled by $\sigma_j$. 
The coordinate $(z(b_{\alpha(j)}),z(b_{\beta(j)}))$ is in the square labeled by $\sigma_j$ if $z(b_{\alpha(j)})=\sigma_j0*$  and $z(b_{\beta(j)})=\sigma_j1*$.

For a given $\bar{\sigma}=(\sigma_j)_{j=1}^d$, we can define $A_{\bar{\sigma}}$, which collects all the choices of $2d$-tuples of $b_i$'s that satisfy the $t$-conditions and for each $j$ satisfy the $j$th $z$-condition inside the square $\sigma_j$.

\begin{equation}\label{eq:As}
A_{\bar{\sigma}}=\left\{
  \raisebox{0mm}{$\bar b=(b_i)^{2d}_{i=1}\in\prod B_i$}
  \left|
  \parbox{2.3in}{\centering $t(b_1)<t(b_2)<\cdots <t(b_{2d}),$\\ 
  $\forall j\in \{1,\cdots,d\}, z(b_{\alpha(j)})=\sigma_j 0*$\\
  \text{ and } $z(b_{\beta(j)})=\sigma_j1* $}
  \right. \right\}.
\end{equation}

If we let $ \bar q=(q_j)_{j=1}^{d}\in\{0,\cdots, p-1\}^d$, we can consider all $\bar \sigma$'s so that the $j$th coordinate in $\bar \sigma$ has length equal to the $j$th entry in $\bar q$, or $|\sigma_j|=q_j$. Then for we can define 

\begin{equation}\label{eq:Aq}
     A_{\bar q}=\bigcup_{\bar\sigma: \forall j, |\sigma_j|=q_j} A_{\bar \sigma},
\end{equation}

\noindent which collects all the choices of $b_i$ that satisfy the $t$-conditions and satisfy the $j$th $z$-condition inside the diagonal $q_j$. Finally, collecting up all possible $A_{\bar q}$'s gathers every possible desired choice of $b_i$'s that satisfy the $t$-conditions and the $z$-conditions, which is the desired set $A$,
\begin{equation}\label{eq:A}
    A=\bigcup_{\bar q \in\{0,\cdots,p-1\}^d} A_{\bar q}.
\end{equation} 

In essence, we break $A$ into pieces by specifying that the $z$-condition must be satisfied on certain diagonals, the sets $A_{\bar q}$. Then we further break down $A_{\bar q}$ by specifying that the $z$-condition must be satisfied in specific squares within the diagonals $\bar q$, the sets $A_{\bar \sigma}$.

To compute the size of these sets, first notice from Equation \ref{eq:As} that $A_{\bar \sigma}$ can be written as the set $N$ from Lemma \ref{lem:JustTRelations} with the sets $B_i$ replaced by subsets $B_i'\subseteq B_i$, where  
\[
  B_i'=\left \{
  \raisebox{0mm}{$b\in B_i$}
  \left|
  \parbox{2in}{\centering\text{ if } $\alpha(j)=i, \text{ then } z(b)=\sigma_j 0*$\\ \text{ if } $\beta(j)=i, \text{ then } z(b)=\sigma_j1*$ }
  \right.\right\},
\]
and where $j\in\{1,\ldots,d\}$ is such that either $\alpha(j)=i$ or $\beta(j)=i$ (exactly one such $j$ exists and exactly one of the conditions is met as $im{(\alpha)}\cup im{(\beta)}=\{1,\hdots,2d\}$).

We are given that $z|_{B_i}$ is injective, so $|z(B_i)|=|B_i| \sim L\sim 2^p$. The elements of $z(B_i)$ are binary sequences of length $p$. 
In the new subset $z(B_i')$, we are constraining the first $|\sigma_j|+1=q_j+1$ entries of the sequences, so there are $p-(q_j+1)$ free digits in each element of $z(B_i')$. 
With this, we can approximate the size of $B_i'$, $$|B_i'|=|z(B_i')|\leq 2^{p-(q_j+1)}=\frac{2^p}{2^{q_j+1}}\sim \frac{2^p}{2^{q_j}}\sim \frac{L}{2^{q_j}}\leq \frac{L}{2^{min(q_j) }}.$$

Using Lemma~\ref{lem:JustTRelations} we can compute $|A_{\bar \sigma}|$ in time $\sim \max |B_i'|\sim \frac{L}{2^{min(q_j)}}$.
In Equation \ref{eq:Aq}, $A_{\bar q}$ is written as a union of $A_{\bar\sigma}$'s where there are $2^{\sum q_i}$  possible choices for $\bar\sigma$. So, $|A_{\bar q}|$ can be computed in time $\sim 2^{\sum q_i} \left (\frac{L}{2^{min(q_j)}}\right)=2^{\sum'q_i}L$, where $\sum'$ denotes ``sum with the smallest summand omitted''. The worst case is when in $\bar{q}=(q_j)_{j=1}^d$ all but one of the entries are $p-1$ and one (the one omitted in $\sum'$) is unconstrained, and in that case the complexity is $(2^{p-1})^{d-1}L\sim L^d$.

Lastly, from Equation \ref{eq:A}, $A$ is the union of $A_{\bar q}$'s where the number of choices for $\bar q$ is $p^d \sim (\log_2 L)^d \sim 1$.
So up to $\sim$, we can compute $|A|$ with at most the complexity of the most expensive $|A_{\bar q}|$, which is $\sim L^d$.
\end{proof}

\vspace{1cm}
\noindent On behalf of all authors, the corresponding author states that there is no conflict of interest.
\bibliographystyle{alpha}
\bibliography{YarnBallBib}

\end{document}